 \newtheorem{theorem}{Theorem}[section]
 \newtheorem{cory}[theorem]{Corollary}
 \theoremstyle{definition}
 \newtheorem{df}[theorem]{Definition}
 \theoremstyle{remark}
 \numberwithin{equation}{section}
\newcommand{\al}{\alpha}
\newcommand{\vti}{\widetilde{v}}
\newcommand{\we}{\omega}
\newcommand{\weti}{\widetilde{\we}}
\newcommand{\vot}{V}
\newcommand{\kom}{T}
\newcommand{\kkom}{K}
\newcommand{\lbd}{\nu_d}
\newcommand{\lbarea}{\nu_1}
\newcommand{\vtgp}{V_g^\varphi}
\newcommand{\fk}{\mathcal{F}}
\newcommand{\ph}{\varphi}
\newcommand{\cph}{C_\varphi}
\newcommand{\cmpg}{C_{\varphi, g}}
\newcommand{\hol}{\mathcal{H}ol}
\newcommand{\Dbb}{\mathbb D}
\newcommand{\YY}{\mathcal L}
\newcommand{\cd}{{\mathbb{C}}^d}
\newcommand{\Rbb}{\mathbb R}
\newcommand{\Cbb}{\mathbb C}
\newcommand{\Nbb}{\mathbb N}
\newcommand{\rad}{\mathcal R}
\newcommand{\grw}{\mathcal{A}^{\we}}
\newcommand{\grass}{\mathcal{A}^{\weti}}
\begin{document}

%
%
%
%
%
%
%
%
%

\title[Volterra type operators on growth Fock spaces]
{Volterra type operators\\ on growth Fock spaces}

\author[E.~Abakumov]{Evgeny Abakumov}

\address{%
Universit\'e Paris-Est\\
LAMA (UMR 8050)\\
77454 Marne-la-Vall\'ee\\
France}

\email{evgueni.abakoumov@u-pem.fr}

\thanks{The second author was partially supported by RFBR (grant No. 14-01-00198-a)}
\author[E.~Doubtsov]{Evgueni Doubtsov}
\address{%
St.~Petersburg Department
of V.A.~Steklov Mathematical Institute\\
Fontanka 27\\
St.~Petersburg 191023\\
Russia}
\email{dubtsov@pdmi.ras.ru}
\subjclass{Primary 30H20; Secondary 32A15, 47B33, 47B38}

\keywords{Growth Fock space, essential weight, Volterra type operator}


\begin{abstract}
Let $\omega$ be an unbounded radial weight on $\mathbb{C}^d$, $d\ge 1$.
Using results related to approximation of $\we$ by entire maps,
we investigate Volterra type and weighted composition operators defined on the growth space $\mathcal{A}^\omega(\mathbb{C}^d)$.
Special attention is given to the operators defined on the growth Fock spaces.
\end{abstract}

\maketitle

\section{Introduction}\label{s_int}

Let $\hol(\cd)$ denote the space of entire functions on $\cd$, $d\ge 1$.
Given a symbol $g\in \hol(\cd)$, the extended Ces\`aro operator $\vot_g$
is defined as
\[
\vot_g f(z) = \int_0^1 f(tz) \rad g(tz)\, \frac{dt}{t},
\]
where $f\in\hol(\cd)$, $z\in\cd$ and
$\rad g (z) = \sum_{j=1}^d z_j \frac{\partial g}{\partial z_j}(z)$
is the radial derivative of $g$. Relations between the function theoretic properties of the symbol $g$
and the operator theoretic properties of $\vot_g$ have been intensively investigated after
the works of Pommerenke \cite{Po77}, and Aleman and Siskakis \cite{AS97} for $d=1$.
Given a holomorphic map $\ph: \cd \to \cd$, we also consider the composition operator $\cph: f \mapsto f\circ\ph$.
The superpositions
$\vtgp = \vot_g \circ \cph$ and $\cmpg = \cph \circ \vot_g$
are called Volterra type operators.

\subsection{Fock spaces}\label{ss_fock}
The present paper is motivated by recent studies of $\vot_g$, $\vtgp$, $\cmpg$
and related operators defined on or mapping to the Fock type spaces
$\fk_\al^p(\cd)$, $\al>0$, $0<p\le \infty$; see, for example,
\cite{Coo12, Hu13, Men13IEOT, Men14JGA, Men16pota, U07, U16}.
The weighted Fock space $\fk_\al^p(\cd)$, $d\ge 1$, $0<p<\infty$, $\al>0$, consists of those
$f\in\hol(\cd)$ for which
\[
\|f\|_{\fk_\al^p}^p = \left(\frac{\al p}{2\pi} \right)^d \int_{\cd} |f(z)|
^p e^{-\frac{\al p}{2} |z|^2} \, d\lbd (z) < \infty,
\]
where $\lbd$ denotes Lebesgue measure on $\cd$.
For $p=\infty$, the corresponding growth Fock space $\fk_\al^\infty(\cd)$
consists of those
$f\in\hol(\cd)$ for which
\[
\|f\|_{\fk_\al^\infty} = \sup_{z\in\cd} |f(z)| e^{-\frac{\al}{2}|z|^2} < \infty.
\]
The operators $\vtgp: \fk_\al^p(\Cbb)\to \fk_\al^q(\Cbb)$ with $p=\infty$ or $q=\infty$
were investigated by Mengestie \cite{Men13IEOT}.
In fact, the case $q=\infty$ fits into certain general schemes. For example,
given $g\in \hol(\cd)$, consider the weighted composition operator
$\cph^g f (z) = g(z) \cph f(z)$, $z\in \cd$.
Given a Banach space $X\subset \hol(\cd)$, the bounded operators $\cph^g: X \to \fk_\al^\infty(\cd)$
are characterizable in terms of $\al, d$ and the norms $\|\delta_z\|_{X^*}$, $z\in\cd$, where $\delta_z(f) = f(z)$.
In particular, for the unit disk $\Dbb$ of $\Cbb$ and $X\subset\hol(\Dbb)$,
similar schemes were used in \cite{Du11} and \cite{CT16} for $\cph^g: X \to Y$,
where $Y$ is a growth, weighted Bloch or Lipschitz space of arbitrary order.
The corresponding compact operators are described by related little-oh conditions.
So, we concentrate our attention on the case where $p=\infty$.

For $p=\infty$, as in the work of Ueki \cite{U07}, the characterizations of bounded (compact)
operators $\vtgp$ are based in \cite{Men13IEOT} on
Berezin type integral transforms; see Section~\ref{s_fk} for details.
We use approximation of radial weights by appropriate entire functions to
obtain more explicit descriptions of the bounded (compact) $\vtgp$ and similar operators
on $\fk_\al^\infty(\cd)$, $d\ge 1$, and on general growth spaces $\grw(\cd)$, $d\ge 1$.

\subsection{Growth spaces}
Let $\we: [0, +\infty)\to (0, +\infty)$ be {a weight function},
that is, let $\we$ be non-decreasing, continuous and unbounded.
Setting $\we(z) = \we(|z|)$ for $z\in\cd$, we extend $\we$
to {a radial weight} on $\cd$.
The corresponding growth space $\grw(\cd)$ consists of those $f\in\hol(\cd)$ for which
\[
\|f\|_{\grw} = \sup_{z\in\cd} \frac{|f(z)|}{\we(z)} < \infty.
\]
Given a set $X$ and functions $u, v: X\to (0,+\infty)$, we write $u\asymp v$
and we say that $u$ and $v$ are {equivalent} if
$C_1 u(x) \le v(x) \le C_2 u(x)$, $x\in X$,
for some constants $C_1, C_2 >0$. Replacing $\we$ in the definition of $\grw(\cd)$
by an equivalent weight function,
we clearly obtain the same space, with an equivalent norm.
In what follows, we also always assume that
$\lim_{t\to +\infty} t^{-k} \we(t) =\infty$ for all $k\in\Nbb$.
This condition excludes finite-dimensional spaces $\grw(\cd)$ of polynomial growth.

\subsubsection{Associated weight functions}
To work with arbitrary weight functions, we
use the notion of associated weight formally introduced in \cite{BBT98}.

\begin{df}
Let $v: [0, +\infty) \to (0, +\infty)$ be a weight function.
For $d\ge 1$, let $v_d(z) = v(|z|)$, $z\in\cd$, be the corresponding radial weight on $\cd$.
{The associated weight} $\vti_d$ is defined by
\[
\vti_d(z)
=\sup\{|f(z)|: f\in\hol(\cd),\ |f|\le v_d\ \mathrm{on\ } \cd\}, \quad z\in\cd.
\]
\end{df}
As observed in \cite{BBT98}, $\vti_1$ is a radial weight, hence,
the associated weight function $\vti_1: [0,+\infty) \to (0,+\infty)$ is uniquely defined.
Using compositions with unitary transformations of $\cd$, we conclude that
$\vti_d$ is radial for all $d\ge 1$.
Observe that a different notion of radial weight is used in \cite{BBT98} for $d\ge 2$.
Also, standard arguments guarantee that
the identity $\vti_d = \vti:= \vti_1$, $d\ge 2$,
holds for the associated weight functions.

\subsubsection{Essential weight functions}\label{ss_ess}
Let $\we: [0,+\infty) \to (0,+\infty)$ be an arbitrary weight function.
The weight function $\weti$ correctly defines
the corresponding associated weight on $\cd$ for any $d\ge 1$.
Also, the definition of the associated weight guarantees that $\grw(\cd)=\grass(\cd)$ isometrically.
However, working with $\grass(\cd)$, we obtain 
answers in terms of $\weti$.
So, as in \cite{BBT98}, we say that $\we$ is {essential} if $\we$ is equivalent to $\weti$ on $[0, +\infty)$.
Also, it is known and easy to see that $\widetilde{(\weti)} = \weti$, hence,
$\weti$ is always essential.
This observation allows to apply approximation results from \cite{AD16} to arbitrary weight functions.

\subsection{Results}
In this section, we formulate several typical results of the present paper.
Our first theorem is a general fact related to operators mapping into lattices,
spaces with minimal structure assumptions.
Let $\YY(\cd)$ be a linear space whose elements are functions $F:\cd\to\Cbb$.
We say that $\YY(\cd)$ is a lattice if the following property holds:
\textsl{Assume that $f, F:\cd\to\Cbb$ are continuous functions and $|f(z)|\le |F(z)|$, $z\in\cd$.
If $F\in\YY(\cd)$, then $f\in\YY(\cd)$}.
%

\begin{theorem}\label{t_wco}
Suppose that $\we: [0,+\infty) \to (0, +\infty)$ is a weight function,
$g\in\hol(\cd)$, $d\ge 1$, $\ph:\cd\to\cd$ is a holomorphic map,
and $\YY(\cd)$ is a lattice. Then the weighted composition operator
$\cph^g$ maps $\grw(\cd)$ into $\YY(\cd)$ if and only if
\begin{equation}\label{e_wco}
|g(z)| \weti (|\ph(z)|) \in\YY(\cd),
\end{equation}
where $\weti$ denotes the associated weight function.
\end{theorem}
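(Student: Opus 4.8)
The plan is to prove the equivalence by playing off two facts against the lattice axiom. The first is the isometric identity $\grw(\cd)=\grass(\cd)$ recorded above, which tells us that $\|f\|_{\grw}\le 1$ is the same condition as $|f(w)|\le\weti(|w|)$ for every $w\in\cd$ (indeed, $|f|\le\we$ implies $|f(w)|\le\weti(|w|)$ by the very definition of the associated weight, and $\weti\le\we$ gives the reverse). The second, and decisive, ingredient is the approximation of $\we$ by entire maps announced in the introduction: I would use the results of \cite{AD16} to produce an entire map $\PH=(f_1,\dots,f_N)\colon\cd\to\Cbb^N$ whose components satisfy $|f_j|\le\we$ on $\cd$ and
\[
\max_{1\le j\le N}|f_j(w)| \ge c\,\weti(|w|), \qquad w\in\cd,
\]
for a fixed constant $c>0$. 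Each $f_j$ then lies in the unit ball of $\grw(\cd)$.

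For the ``if'' direction I would argue directly. Assume $h(z):=|g(z)|\weti(|\ph(z)|)$ belongs to $\YY(\cd)$, and take any $f\in\grw(\cd)$ with $\|f\|_{\grw}\le 1$. The isometric identity gives $|f(w)|\le\weti(|w|)$ for all $w$, so that for every $z\in\cd$
\[
|\cph^g f(z)| = |g(z)|\,|f(\ph(z))| \le |g(z)|\,\weti(|\ph(z)|) = h(z).
\]
Since $\cph^g f$ is holomorphic, hence continuous, and $h\in\YY(\cd)$, the lattice property forces $\cph^g f\in\YY(\cd)$; as $f$ ranges over the unit ball (and by homogeneity over all of $\grw(\cd)$), this shows $\cph^g$ maps $\grw(\cd)$ into $\YY(\cd)$.

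The converse is where the work lies. First I would record that a lattice which is also a linear space is stable under finite maxima of moduli: if $F_1,\dots,F_N\in\YY(\cd)$, then each $|F_j|\in\YY(\cd)$ (apply the axiom to $|F_j|\le|F_j|$), hence $\sum_j|F_j|\in\YY(\cd)$ by linearity, and finally $\max_j|F_j|\le\sum_j|F_j|$ gives $\max_j|F_j|\in\YY(\cd)$. Now suppose $\cph^g$ maps $\grw(\cd)$ into $\YY(\cd)$ and let $f_1,\dots,f_N$ be the functions from the approximation step, so that $\cph^g f_j\in\YY(\cd)$ for each $j$. Evaluating the lower bound at $w=\ph(z)$,
\[
h(z) = |g(z)|\,\weti(|\ph(z)|) \le \frac1c\,|g(z)|\max_{1\le j\le N}|f_j(\ph(z))| = \frac1c\,\max_{1\le j\le N}|\cph^g f_j(z)|.
\]
The right-hand side is in $\YY(\cd)$ by the stability remark, and $h$ is continuous because $\weti$ is a continuous weight function; the lattice property then yields $h\in\YY(\cd)$, which is exactly \eqref{e_wco}.

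The main obstacle is concentrated in the converse, and specifically in the \emph{uniformity} of the approximation. A single near-extremal function can match $\weti$ at one point but cannot dominate the radial weight $\weti$ from below simultaneously at every $w$, so the pointwise extremal principle alone is insufficient; one genuinely needs the finite family $f_1,\dots,f_N$ (equivalently, the single $\Cbb^N$-valued entire map) furnished by \cite{AD16}. Once that input is available, the remaining steps---stability of the lattice under finite maxima and continuity of $\weti$---are routine, and both have been isolated above.
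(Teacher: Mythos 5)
Your proposal is correct and follows essentially the same route as the paper: the forward direction uses the isometric identity $\grw(\cd)=\grass(\cd)$ plus the lattice axiom, and the converse invokes the approximation theorem from \cite{AD16} (applied to the essential weight $\weti$) to produce finitely many functions $f_1,\dots,f_n$ in the unit ball whose moduli dominate $\weti$ from below, then transfers membership in $\YY(\cd)$ back to $|g(z)|\weti(|\ph(z)|)$ via linearity and the lattice property. The only cosmetic difference is that you work with $\max_j|f_j|$ where the paper uses $\sum_j|f_j|$; this changes nothing of substance.
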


In the present paper, $\YY(\cd)$ is usually a weighted $L^q$ space.
In particular, for Volterra type operators mapping into Fock type spaces, we have the following corollaries.

\begin{cory}\label{c_volt_grw_to_fk}
Let $\we$ be an arbitrary radial weight on $\cd$, $d\ge 1$.
Assume that $g\in\hol(\cd)$, $\ph:\cd\to\cd$ is a holomorphic map, $\alpha>0$ and $0<q\le\infty$.
Then the following properties are equivalent:
\begin{itemize}
  \item [(i)] $\vtgp: \grw(\cd) \to \fk_\al^q(\cd)$ is a bounded operator;
  \item [(ii)]
  \[
  \frac{|\rad g(z)|}{(1+|z|)^2} e^{-\frac{\al}{2} |z|^2} \weti(|\ph(z)|) \in L^q(\cd, \lbd).
  \]
\end{itemize}
\end{cory}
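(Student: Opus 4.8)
The plan is to pass to the radial derivative, which converts $\vtgp$ into a weighted composition operator, and then to invoke Theorem~\ref{t_wco}. First I would record the basic identity for the extended Ces\`aro operator. Since $\rad g(0)=0$, the integrand defining $\vtgp f$ stays bounded near $t=0$ and $\vtgp f(0)=0$ for every $f$. Writing the integrand as $h(w)=(f\circ\ph)(w)\,\rad g(w)$ and using $\rad h(tz)=t\frac{d}{dt}h(tz)$ together with $\rad_z[h(tz)]=\rad h(tz)$, integration along rays yields
\[
\rad(\vtgp f)(z)=\int_0^1\frac{d}{dt}h(tz)\,dt=h(z)-h(0)=f(\ph(z))\,\rad g(z)=\cph^{\rad g}f(z),\qquad z\in\cd .
\]
Thus $\rad\circ\vtgp=\cph^{\rad g}$, the weighted composition operator with holomorphic symbol $\rad g$.

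The second, and main, step is a Fock-space norm equivalence: for every $F\in\hol(\cd)$,
\[
\|F\|_{\fk_\al^q}\asymp |F(0)|+\left\|\frac{\rad F(z)}{(1+|z|)^2}\,e^{-\frac{\al}{2}|z|^2}\right\|_{L^q(\cd,\lbd)},
\]
with the supremum norm in place of the $L^q$ norm when $q=\infty$. The inequality ``$\gtrsim$'' is the Cauchy-estimate bound: estimating $\partial_j F$ on a ball of radius $\asymp(1+|z|)^{-1}$ centred at $z$ costs a factor $1+|z|$, and the extra $|z|$ in $\rad F=\sum_j z_j\partial_j F$ produces the square $(1+|z|)^2$; one then passes from this pointwise bound to the $L^q$ norm by the sub-mean-value property. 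The inequality ``$\lesssim$'' rests on the recovery formula $F(z)-F(0)=\int_0^1\rad F(tz)\,\frac{dt}{t}$: inserting the pointwise majorant for $\rad F$ and using that the ray integral concentrates near $t=1$, where the Gaussian weight $e^{\frac{\al}{2}t^2|z|^2}$ is largest, shows that the polynomial factor $(1+t|z|)^2$ is absorbed, while the part of the ray near the origin is estimated directly from the homogeneous expansion of $F-F(0)$.

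Finally I would combine the two steps. Because $\vtgp f(0)=0$, the norm equivalence shows that $\vtgp:\grw(\cd)\to\fk_\al^q(\cd)$ is bounded if and only if $\cph^{\rad g}$ maps $\grw(\cd)$ boundedly into the set
\[
\YY(\cd)=\Big\{F:\cd\to\Cbb\ \text{measurable}:\ \tfrac{|F(z)|}{(1+|z|)^2}\,e^{-\frac{\al}{2}|z|^2}\in L^q(\cd,\lbd)\Big\},
\]
which is clearly a lattice. Applying Theorem~\ref{t_wco} with symbol $\rad g$ and lattice $\YY(\cd)$, the latter holds if and only if $|\rad g(z)|\,\weti(|\ph(z)|)\in\YY(\cd)$, which is precisely condition~(ii). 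I expect the technical heart to be the norm equivalence of the second step — in particular making the reverse inequality uniform in $q$ and uniform up to the origin; once it is in hand, the reduction to Theorem~\ref{t_wco} is purely formal.
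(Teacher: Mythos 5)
Your proposal is correct and follows essentially the same route as the paper: the identity $\rad(\vtgp f)=(f\circ\ph)\,\rad g$ is the paper's equation \eqref{e_hu2003} (quoted from \cite{Hu03}), the norm equivalence you sketch is exactly Theorem~\ref{t_fk_eqnorms} (quoted from \cite{Hu13, U16}), and the reduction to Theorem~\ref{t_wco} via the lattice $\YY(\cd)$ is precisely the paper's argument (packaged there as Corollary~\ref{c_rad_volt}). The only difference is that you sketch proofs of these two cited ingredients rather than invoking them as known results.
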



\begin{cory}\label{c_cmpg}
Let $\we$ be a radial weight on $\Cbb$.
Assume that $\ph, g\in\hol(\Cbb)$, $\alpha>0$ and $0<q\le\infty$.
Then the following properties are equivalent:
\begin{itemize}
  \item [(i)] $\cmpg: \grw(\Cbb) \to \fk_\al^q(\Cbb)$ is a bounded operator;
  \item [(ii)]
  \[
  \frac{|g^\prime(z) \ph^\prime(z)|}{1+|z|} e^{-\frac{\al}{2} |z|^2} \weti(|\ph(z)|) \in L^q(\Cbb, \lbarea).
  \]
\end{itemize}
\end{cory}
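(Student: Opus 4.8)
The plan is to differentiate the output of $\cmpg$ and thereby recognize it, at the level of the derivative, as a weighted composition operator of the type governed by Theorem~\ref{t_wco}. The one-variable Ces\`aro operator satisfies $\rad g(\zeta)=\zeta g'(\zeta)$, so $\vot_g f(\zeta_0)=\int_{[0,\zeta_0]}f(\zeta)g'(\zeta)\,d\zeta$; in particular $\vot_g f(0)=0$ and $(\vot_g f)'=fg'$. Writing $F=\cmpg f=(\vot_g f)\circ\ph$, the chain rule yields
\[
F'(z)=(\vot_g f)'(\ph(z))\,\ph'(z)=f(\ph(z))\,g'(\ph(z))\,\ph'(z)=\cph^{(g\circ\ph)'}f(z),
\]
the weighted composition operator with symbol $\ph$ and multiplier $(g\circ\ph)'$. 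This identity is where $d=1$ is essential: for $d\ge 2$ the radial derivative of $(\vot_g f)\circ\ph$ does not factor through $\ph$, so the reduction to a single weighted composition operator breaks down. (By contrast, in Corollary~\ref{c_volt_grw_to_fk} one differentiates before composing, and the identity $\rad(\vot_g h)=h\,\rad g$ holds in every dimension.)

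I would then use the description of the growth Fock norm through the derivative: for every $0<q\le\infty$ there are positive constants such that, for all $F\in\hol(\Cbb)$,
\[
\|F\|_{\fk_\al^q}\asymp|F(0)|+\left\|\frac{F'(z)}{1+|z|}\,e^{-\frac{\al}{2}|z|^2}\right\|_{L^q(\Cbb,\lbarea)}.
\]
Here $\gtrsim$ is the Cauchy estimate on disks of radius comparable to $(1+|z|)^{-1}$, the natural scale of the Fock weight, while $\lesssim$ recovers $F$ from $F'$ by integration; the factor $(1+|z|)^{-1}$ is forced because differentiation behaves like multiplication by $|z|$ on $\fk_\al^q$, as one sees on monomials. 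Establishing this equivalence uniformly in $q$, in particular at the Bloch-type endpoint $q=\infty$, is the main technical point; it is the same equivalence underlying Corollary~\ref{c_volt_grw_to_fk}, there in the radial form with $\rad F/(1+|z|)^2$, which for $d=1$ collapses to $F'/(1+|z|)$.

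Finally I would combine the two ingredients. Let $\YY(\Cbb)$ be the space of continuous $H:\Cbb\to\Cbb$ with $\frac{H(z)}{1+|z|}e^{-\frac{\al}{2}|z|^2}\in L^q(\Cbb,\lbarea)$; being a weighted $L^q$ space with positive finite weight, it is a lattice in the sense of Theorem~\ref{t_wco}. Since $f\mapsto F(0)=(\vot_g f)(\ph(0))=\int_{[0,\ph(0)]}f(\zeta)g'(\zeta)\,d\zeta$ is a bounded functional on $\grw(\Cbb)$, the norm equivalence above shows that $\cmpg:\grw(\Cbb)\to\fk_\al^q(\Cbb)$ is bounded if and only if $f\mapsto F'=\cph^{(g\circ\ph)'}f$ maps $\grw(\Cbb)$ boundedly into $\YY(\Cbb)$. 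Applying Theorem~\ref{t_wco} with multiplier $(g\circ\ph)'$ and lattice $\YY(\Cbb)$, this is equivalent to $|(g\circ\ph)'(z)|\,\weti(|\ph(z)|)\in\YY(\Cbb)$, i.e.\ to
\[
\frac{|(g\circ\ph)'(z)|}{1+|z|}\,e^{-\frac{\al}{2}|z|^2}\,\weti(|\ph(z)|)\in L^q(\Cbb,\lbarea),
\]
which is condition (ii). Note that $\weti$ appears exactly as in Theorem~\ref{t_wco}, as the pointwise supremum of $|f|$ over the unit ball of $\grw(\Cbb)$, so the last step is a genuine equivalence rather than a one-sided estimate.
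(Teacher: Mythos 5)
Your proof is correct and follows essentially the same route as the paper's: write $\cmpg f(z)=\int_0^{\ph(z)}f(\xi)g'(\xi)\,d\xi$, differentiate to recognize at the derivative level the weighted composition operator with multiplier $(g\circ\ph)'$, invoke the derivative characterization of $\fk_\al^q(\Cbb)$ (Theorem~\ref{t_fk_eqnorms}), and conclude with Theorem~\ref{t_wco}; your treatment is merely more explicit about the $|F(0)|$ term and the one-dimensional reduction $\rad F/(1+|z|)^2 \asymp F'/(1+|z|)$. Note that both your argument and the paper's own proof produce $g'(\ph(z))\ph'(z)$ in condition (ii), so the $g'(z)$ printed in the statement appears to be a typo.
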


Corollary~\ref{c_cmpg} and analogous results are restricted to spaces of holomorphic functions on $\Cbb$,
since the corresponding proofs
depend on differentiation of compositions.

Also, we obtain similar explicit results for $\kom_g$, the companion to $\vot_g$ defined as
\[
\kom_g f(z) = \int_0^1 \rad f(tz) g(tz)\, \frac{dt}{t},\quad f\in\hol(\cd),\ z\in\cd.
\]
Berezin type characterizations of such operators
were obtained in \cite{Men16pota}.

\subsection{Organization of the paper}
Auxiliary facts are collected in Section~\ref{s_aux}.
Section~\ref{s_basics} contains basic results
related to the weighted composition operators from a growth space into a lattice.
In particular, we prove Theorem~\ref{t_wco} and we characterize $q$-Carleson measures for
$\fk_\al^\infty(\cd)$, $d\ge 1$,
and other growth spaces.
Corollaries~\ref{c_volt_grw_to_fk} and \ref{c_cmpg} are obtained in Section~\ref{s_fk}.
We prove related corollaries for
certain combinations of $\kom_g$ and $\cph$
defined on the growth Fock spaces.
 Also, we compare particular cases of the corollaries obtained and analogous results from \cite{Men13IEOT, Men16pota}.

\section{Auxiliary results}\label{s_aux}
\subsection{Approximation by entire maps}

\begin{theorem}[{\cite[Theorem~3.5]{AD16}}]\label{t_TropApp}
Let $\we: [0,+\infty) \to (0, +\infty)$ be a weight function.
Then $\we$ is essential if and only if there exists $n\in\Nbb$ and a holomorphic
map $f: \cd\to \Cbb^n$ such that $|f_1| + \dots +|f_n| \asymp \we$.
\end{theorem}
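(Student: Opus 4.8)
The plan is to prove both implications by reducing everything to the radial, one-variable picture and to the classical description of the associated weight of a radial weight via extremal monomials. Since $\we$ is radial, so is $\weti$, and for a monomial $z\mapsto cz^k$ one has $|cz^k|\le\we$ on $\cd$ exactly when $|c|\le 1/M_k$, where $M_k=\sup_{r>0} r^k/\we(r)$. The hypothesis $t^{-k}\we(t)\to\infty$ guarantees $0<M_k<\infty$, so each $z^k/M_k$ is admissible in the definition of $\weti$; hence $\weti(r)\ge\sup_{k\ge0} r^k/M_k$, while $\weti\le\we$ always holds (every competitor satisfies $|f|\le\we$ pointwise). Writing $u(s)=\log\we(e^s)$ one recognizes $\log M_k$ as the Legendre transform of $u$ and $\log\bigl(\sup_k r^k/M_k\bigr)$ as, up to the integrality of $k$, the largest convex minorant $u^{\ast\ast}$ of $u$; thus $\we$ is essential precisely when $u$ stays within a bounded additive distance of $u^{\ast\ast}$. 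I record the two directions separately.

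The implication ``representation $\Rightarrow$ essential'' is the soft one. Suppose $f=(f_1,\dots,f_n):\cd\to\Cbb^n$ is holomorphic with $c\,\we\le|f_1|+\dots+|f_n|\le C\,\we$ on $\cd$. Since $|f_j|\le\sum_i|f_i|\le C\we$, the function $f_j/C$ satisfies $|f_j/C|\le\we$ and hence is admissible in the definition of $\weti$; therefore $\weti\ge|f_j|/C$ for every $j$, so
\[
\weti \ge \frac1C\max_{1\le j\le n}|f_j| \ge \frac1{nC}\sum_{j=1}^n|f_j| \ge \frac{c}{nC}\,\we .
\]
Together with $\weti\le\we$ this gives $\we\asymp\weti$, i.e.\ $\we$ is essential.

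For the converse I would build the map explicitly from extremal monomials. Assuming $\we$ essential, so that $u-u^{\ast\ast}$ is bounded, choose a lacunary sequence of indices $k$ located at the corners where the slope of $u^{\ast\ast}$ increases, and form entire functions whose Taylor supports are these indices carrying the extremal coefficients $1/M_k$. The convexity of $u^{\ast\ast}$ forces the terms $r^k/M_k$ to decay geometrically away from the index dominant at radius $r$; summing the geometric tails yields the upper bound $\sum_j|f_j|\lesssim\sup_k r^k/M_k\asymp\weti\asymp\we$. The lower bound is the delicate point: at a given radius the dominant monomial has modulus $\asymp\we(r)$ only on the positive ray, and its modulus oscillates on the sphere. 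I would distribute the chosen indices among the $n$ components (twisting by unimodular factors if necessary) so that the components have no common near-zeros on any sphere, forcing at least one $|f_j(z)|$ to be comparable to $\we(|z|)$ for every $z$. Because the scalars $M_k$, and hence all analytic input, are dimension independent (the associated weight satisfies $\weti_d=\weti_1$), I would run the construction with monomials on $\cd$, the dimension entering only in this counting step.

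The hard part is exactly this uniform lower bound with a \emph{bounded} number $n$ of functions. A single entire function with nonnegative coefficients is comparable to $\we$ only in a shrinking angular window around the positive ray, so finiteness of $n$ cannot be obtained merely by rotating one function. The real content is the combinatorial/geometric argument that organizes the extremal monomials into finitely many entire functions whose moduli jointly stay bounded below by $\we$ in all complex directions, and it is here that the convex-minorant structure of $u^{\ast\ast}$ must be exploited fully; controlling the transfer of this argument to $\cd$ for $d\ge2$ is the last point I would check carefully.
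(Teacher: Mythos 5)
First, a remark on context: the paper does not prove this statement at all --- it is imported verbatim from \cite[Theorem~3.5]{AD16} (the ``tropical power series'' paper), so there is no internal proof to compare against. Judged on its own terms, your proposal is half a proof. The implication ``representation $\Rightarrow$ essential'' is complete and correct: each $f_j/C$ is admissible in the definition of $\weti$, so $\weti\ge\max_j|f_j|/C\ge\frac{c}{nC}\we$, and $\weti\le\we$ is trivial. Your identification of $\log M_k$ with the Legendre transform of $u=\Phi_\we$ and of $\sup_k r^k/M_k$ with (essentially) $\exp u^{**}$ is also the right frame, and it is consistent with the paper's own remark that equivalence to a log-convex weight is necessary for essentiality.

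The converse direction, however, is where the entire content of the theorem lives, and you do not prove it --- you describe a plan and then explicitly flag its crucial step as unverified. Concretely: a single lacunary series $g(z)=\sum_k z^{n_k}/M_{n_k}$ with nonnegative coefficients satisfies $|g(z)|\asymp\we(|z|)$ only when one term strictly dominates the tail; at the crossover radii where two consecutive extremal monomials have equal modulus, their sum can nearly cancel for suitable $\arg z$, so the lower bound fails on part of every such sphere. Repairing this with a \emph{bounded} number $n$ of entire functions requires an actual construction --- choosing the lacunary corners of $u^{**}$, interleaving them between components, and proving that at every $z$ at least one component escapes cancellation --- together with a verification that the resulting map works on $\cd$ for $d\ge 2$ and not just on rays. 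None of this is carried out, and ``twisting by unimodular factors if necessary so that the components have no common near-zeros'' is a statement of the goal, not an argument. As it stands the proposal establishes only the easy implication; the hard implication remains exactly as open in your write-up as it would be if you simply cited \cite{AD16}, which is what the paper does.
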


To work with explicit weight functions $\we$, we need a sufficient condition for being essential.
So, consider the logarithmic transform
\[
\Phi_\we(x) = \log \we(e^x), \quad -\infty < x < +\infty.
\]
If $\Phi_\we$ is a convex function, then $\we$ is called {log-convex}.
Observe that the property of being essential for $\we$ depends
only on the behavior of $\Phi_\we$ at $+\infty$.
In fact, Theorem~\ref{t_TropApp} implies that equivalence to a log-convex weight function
is necessary for being essential.
The following sufficient condition is somewhat more stringent.

\begin{theorem}[{\cite[paragraph after Example~1]{AD16}}]\label{t_ess_suff}
Let $\we: [0,+\infty) \to (0, +\infty)$ be a log-convex and $\mathcal{C}^2$-smooth weight function.
Assume that
\begin{equation}\label{e_ess_suff}
\liminf_{x\to +\infty} \Phi_\we^{\prime\prime}(x) >0.
\end{equation}
Then $\we$ is essential.
\end{theorem}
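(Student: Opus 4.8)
The plan is to establish essentiality directly from the definition of the associated weight, bypassing the more expensive characterization of Theorem~\ref{t_TropApp}. Since the associated weight function is $\weti=\weti_1$, it suffices to work on $\Cbb$ and to show $\weti\asymp\we$ near $+\infty$; the values on a bounded interval are irrelevant, because (as noted above) essentiality depends only on the behaviour of $\Phi_\we$ at $+\infty$ and $\we$ is bounded between positive constants on compacts. The estimate $\weti\le\we$ is immediate from the definition, so the whole content is the reverse inequality $\weti(r)\ge c\,\we(r)$ for large $r$.

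The first step is to lower-bound $\weti$ by monomials. For each $k\in\Nbb_0$ put $M_k=\inf_{\rho>0}\we(\rho)\rho^{-k}$. The standing assumption $\lim_{t\to+\infty}t^{-k}\we(t)=\infty$ forces this infimum to be attained at a finite $\rho>0$, so $0<M_k<\infty$. Since $|M_k z^k|\le\we(|z|)$ on $\Cbb$, each monomial $M_k z^k$ is admissible in the definition of $\weti$, whence
\[
\weti(r)\ \ge\ \sup_{k\in\Nbb_0} M_k r^k,\qquad r\ge 0.
\]
Writing $x=\log r$ and recognizing $-\log M_k=\sup_{y}(ky-\Phi_\we(y))=:\Phi_\we^*(k)$ as the Legendre transform of the convex function $\Phi_\we$, this reads $\log\sup_k M_k r^k=\sup_{k\in\Nbb_0}\bigl(kx-\Phi_\we^*(k)\bigr)$. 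Convex duality gives $\sup_{k\in\Rbb}(kx-\Phi_\we^*(k))=\Phi_\we(x)=\log\we(r)$, the supremum being attained at $k^\ast=\Phi_\we'(x)$.

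It remains to compare the supremum over integers with the supremum over reals, and this is exactly where the hypothesis $\liminf_{x\to+\infty}\Phi_\we''>0$ enters. Fix $c>0$ and $x_0$ with $\Phi_\we''\ge c$ on $[x_0,+\infty)$. On the dual side $(\Phi_\we^*)''(k)=1/\Phi_\we''(y)\le 1/c$ for the corresponding $y\ge x_0$, so the concave function $h(k)=kx-\Phi_\we^*(k)$ satisfies $-h''\le 1/c$ near its maximizer $k^\ast$. Because $\we$ is superpolynomial, $k^\ast=\Phi_\we'(x)\to+\infty$, so for all large $r$ the nearest integer $k_0$ to $k^\ast$ is a legitimate nonnegative index with $|k_0-k^\ast|\le\tfrac12$; a second-order Taylor estimate then yields
\[
h(k_0)\ \ge\ h(k^\ast)-\tfrac12\sup(\Phi_\we^*)''\,(k_0-k^\ast)^2\ \ge\ \log\we(r)-\frac{1}{8c}.
\]
Exponentiating gives $\sup_k M_k r^k\ge e^{-1/(8c)}\we(r)$, hence $\weti(r)\ge e^{-1/(8c)}\we(r)$ for large $r$; together with $\weti\le\we$ this proves $\weti\asymp\we$, i.e.\ that $\we$ is essential. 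The main obstacle is the uniform bookkeeping in this last step: one must ensure that the whole interval $[k^\ast-\tfrac12,k^\ast+\tfrac12]$ corresponds to arguments $y\ge x_0$ (arranged by enlarging $x_0$ slightly) so that the bound $(\Phi_\we^*)''\le 1/c$ is available throughout, and that rounding $k^\ast$ to an integer costs only a bounded factor — which is precisely the uniform convexity furnished by $\liminf_{x\to+\infty}\Phi_\we''>0$.
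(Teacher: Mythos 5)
The paper does not prove Theorem~\ref{t_ess_suff}: it is imported verbatim from \cite{AD16} (``paragraph after Example~1''), so there is no internal proof to compare against. Judged on its own, your argument is correct and self-contained. The two halves are cleanly separated: $\weti\le\we$ is immediate from the definition of the associated weight, and for the converse you minorize $\weti$ by the upper envelope $\sup_{k}M_kr^k$ of the admissible monomials, identify $-\log M_k$ with the Legendre transform $\Phi_\we^*(k)$, and use biconjugation of the convex function $\Phi_\we$ to recover $\log\we(r)$ as the supremum over \emph{real} exponents, attained at $k^*=\Phi_\we'(x)$. The only loss is in passing from real to integer exponents, and that is exactly where hypothesis \eqref{e_ess_suff} enters: uniform convexity of $\Phi_\we$ near $+\infty$ gives $(\Phi_\we^*)''\le 1/c$ on the relevant dual range, so rounding $k^*$ to the nearest integer costs at most the factor $e^{1/(8c)}$. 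You correctly flag the two bookkeeping points (that $k^*\to+\infty$, so the nearest integer is eventually a legitimate nonnegative index, and that the whole interval $[k^*-\tfrac12,k^*+\tfrac12]$ must lie where the curvature bound is available); both are easily arranged. Two cosmetic remarks: for $k=0$ the infimum defining $M_0$ equals $\we(0)$ and need not be attained in $(0,+\infty)$, though $0<M_0<\infty$ still holds; and to get $\weti\asymp\we$ on all of $[0,+\infty)$ one should add that $\weti\ge M_0=\we(0)>0$, so the two weights are trivially comparable on the compact set where the large-$r$ estimate has not yet kicked in. This sup-of-monomials (``tropical'') construction is very much in the spirit of \cite{AD16}, whose Theorem~3.5 the paper does quote; your route has the advantage of bypassing that theorem entirely and yielding an explicit comparison constant.
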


\subsection{Characterizations of the Fock spaces}
\begin{theorem}[see \cite{Hu13, U16}]\label{t_fk_eqnorms}
Let $\al>0$, $0<p\le \infty$ and let $f\in\hol(\cd)$, $d\ge 1$.
Then $f\in\fk_\al^p(\cd)$ if and only if
\[
\frac{|\rad f(z)|}{(1+|z|)^2} e^{-\frac{\al}{2}|z|^2} \in L^p(\cd, \lbd).
\]
\end{theorem}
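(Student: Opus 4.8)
The plan is to prove the stated membership as two one-sided estimates, relating the weighted radial derivative to the Fock norm through local estimates on Euclidean balls $B(z,r_z)$ of shrinking radius $r_z=(1+|z|)^{-1}$. The guiding observation is that on such a ball the Gaussian weight is essentially constant: for $w\in B(z,2r_z)$ one has $\bigl|\,|w|^2-|z|^2\,\bigr|=\bigl|2\RRe\langle z,w-z\rangle+|w-z|^2\bigr|\le 4|z|r_z+4r_z^2\le C$, so $e^{-\frac{\al}{2}|w|^2}\asymp e^{-\frac{\al}{2}|z|^2}$ with constants depending only on $\al$. This is exactly the scale at which the problem localizes, and it also accounts for the exponent in $(1+|z|)^2$.

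\emph{Necessity} ($f\in\fk_\al^p\Rightarrow$ the integrability condition). The Cauchy estimates on $B(z,r_z)$ give $|\nabla f(z)|\lesssim r_z^{-1}\sup_{B(z,r_z)}|f|=(1+|z|)\sup_{B(z,r_z)}|f|$, and since $|\rad f(z)|\le|z|\,|\nabla f(z)|$ by Cauchy--Schwarz we obtain the pointwise bound
\[
\frac{|\rad f(z)|}{(1+|z|)^2}\,e^{-\frac{\al}{2}|z|^2}\le C\sup_{w\in B(z,r_z)}|f(w)|\,e^{-\frac{\al}{2}|w|^2}.
\]
For $p=\infty$ the right-hand side is $\le C\|f\|_{\fk_\al^\infty}$ and we are done. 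For $p<\infty$ I would replace the supremum by an $L^p$ average: since $|f|^p$ is plurisubharmonic, $\sup_{B(z,r_z)}|f|^p\lesssim|B(z,2r_z)|^{-1}\int_{B(z,2r_z)}|f|^p$; inserting the weight (comparable on $B(z,2r_z)$), integrating in $z$, and using Fubini turns the estimate into
\[
\int_{\cd}\Big(\cdots\Big)^p d\lbd(z)\lesssim\int_{\cd}|f(w)|^p e^{-\frac{\al p}{2}|w|^2}\Big(\int_{\{z:\,|z-w|<2r_z\}}(1+|z|)^{2d}\,d\lbd(z)\Big)d\lbd(w).
\]
On the inner domain $1+|z|\asymp 1+|w|$ and the domain has measure $\asymp(1+|w|)^{-2d}$, so the inner integral is $O(1)$ and the whole expression is $\lesssim\|f\|_{\fk_\al^p}^p$.

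\emph{Sufficiency} (the integrability condition $\Rightarrow f\in\fk_\al^p$). Here I would use the radial fundamental theorem of calculus $f(z)-f(0)=\int_0^1 \tfrac1t\,\rad f(tz)\,dt$. For $p=\infty$, writing $M$ for the supremum of the left-hand quantity and inserting $|\rad f(tz)|\le M(1+t|z|)^2 e^{\frac{\al}{2}t^2|z|^2}$, an endpoint Laplace estimate applies: the mass of the $t$-integral concentrates at $t=1$ over a window of width $\sim|z|^{-2}$, so the prefactor $(1+t|z|)^2\sim|z|^2$ is exactly compensated and $|f(z)-f(0)|\lesssim M e^{\frac{\al}{2}|z|^2}$; as constants belong to $\fk_\al^\infty$, this yields $f\in\fk_\al^\infty$. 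For $p<\infty$ I would pass to polar coordinates $z=R\zeta$, $w=\rho\zeta$, $\zeta\in\spn$, which turns the recovery of $f$ along each ray into a one-dimensional integral operator with kernel $k(R,\rho)=\rho^{-1}(1+\rho)^2 e^{-\frac{\al}{2}(R^2-\rho^2)}$, reducing the claim to a weighted Hardy inequality on $L^p((0,\infty),\rho^{2d-1}d\rho)$ uniform in $\zeta$. Since $k$ is concentrated within $|R-\rho|\lesssim\rho^{-1}$ of the diagonal with $\int_\rho^\infty k(R,\rho)\,dR=O(1)$, a Schur test settles $p\ge1$ and a direct near-diagonal estimate settles $0<p<1$.

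The main obstacle is the sufficiency direction, and within it the uniform one-dimensional Hardy estimate across the whole range $0<p\le\infty$. Two points need care: the $p<1$ case, where the Schur test is unavailable and one must exploit the near-diagonal concentration of the Gaussian kernel directly; and the apparent $1/\rho$ singularity at the origin (equivalently the $1/t$ factor at $t=0$), which is harmless because $\rad f$ vanishes to first order there and because the off-diagonal part of $k$ carries the exponentially small factor $e^{-\frac{\al}{2}(R^2-\rho^2)}$ that dominates any polynomial growth. Everything else is the routine bookkeeping of Cauchy estimates, subharmonic averages, and Fubini.
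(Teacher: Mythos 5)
The paper does not actually prove Theorem~\ref{t_fk_eqnorms}: it is imported from \cite{Hu13, U16} with no argument given, so there is no internal proof to compare against and your sketch has to stand on its own. Your overall strategy --- localization at the scale $r_z=(1+|z|)^{-1}$ where the Gaussian is essentially constant, Cauchy estimates plus sub-mean-value inequalities for necessity, and recovery of $f$ from $\rad f$ along rays for sufficiency --- is the standard route for results of this type, and both the necessity direction (all $0<p\le\infty$) and the $p=\infty$ case of the sufficiency are correct as sketched.

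The genuine gap is in the sufficiency for $0<p<1$. You reduce it to boundedness, on $L^p\bigl((0,\infty),\rho^{2d-1}d\rho\bigr)$, of the integral operator $T$ with kernel $k(R,\rho)=\rho^{-1}(1+\rho)^2e^{-\frac{\al}{2}(R^2-\rho^2)}\mathbf{1}_{\{\rho<R\}}$, and propose to settle $p<1$ by ``a direct near-diagonal estimate'' of this kernel. But the reduction itself is false for $p<1$: no near-diagonal analysis of $k$ can work at the level of arbitrary inputs. Testing $T$ on the spike $g_\er=\er^{-1/p}\mathbf{1}_{[\rho_0,\rho_0+\er]}$, whose $L^p$ norm is independent of $\er$, gives
\[
\|Tg_\er\|_{L^p}^p\gtrsim\er^{\,p-1}\int_{\rho_0+\er}^{\infty}k(R,\rho_0)^p\,R^{2d-1}\,dR\longrightarrow\infty\qquad(\er\to0),
\]
since $p-1<0$ and the last integral is a fixed positive constant. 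The missing idea is that for $p<1$ one must use the holomorphy of $\rad f$ \emph{before} taking $p$-th powers of the ray integral: the pointwise sub-mean-value bound $|\rad f(w)|^p\lesssim(1+|w|)^{2d}\int_{B(w,r_w)}|\rad f|^p\,d\lbd$ (the very device you use in the necessity direction) replaces $|\rad f(\rho\zeta)|$ by a local average, after which a decomposition of the ray into intervals of length $\asymp(1+\rho)^{-1}$, the inequality $(\sum_j a_j)^p\le\sum_j a_j^p$, the off-diagonal Gaussian decay of $k$, and Fubini over $\zeta\in\spn$ finish the argument. Without this step the $p<1$ case does not close. A smaller issue of the same quantitative flavour: dismissing the $1/\rho$ singularity at the origin ``because $\rad f$ vanishes to first order'' needs a constant controlled by the hypothesis rather than by $f$; this is supplied by a Schwarz-lemma bound $|\rad f(w)|\lesssim|w|\,\sup_{|u|\le1}|\rad f(u)|$ combined with a sub-mean-value estimate for $\sup_{|u|\le1}|\rad f(u)|$ in terms of the assumed $L^p$ quantity.
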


\section{Basic results}\label{s_basics}
\subsection{Weighted composition operators}

\begin{proof}[Proof of Theorem~\ref{t_wco}]
First, assume that $f\in\grw(\cd)$ and \eqref{e_wco} holds, that is, $|g(z)| \weti (|\ph(z)|) \in\YY(\cd)$.
Since $\grw(\cd)$ isometrically equals $\grass(\cd)$, we have
\[
|\cph^g f(z)| \le \|f\|_{\grass(\cd)} |g(z)| \weti(|\ph(z)|) \in\YY(\cd).
\]
So, $\cph^g f\in\YY(\cd)$ by the definition of a lattice.

Secondly, assume that $\cph$ maps $\grw(\cd)=\mathcal{A}^{\weti}(\cd)$ into $\YY(\cd)$.
As indicated in Section~\ref{ss_ess}, the weight function $\weti$ is essential; thus, applying Theorem~\ref{t_TropApp},
we obtain functions $f_1, \dots, f_n \in \mathcal{A}^{\weti}(\cd)$ such that
\[
|f_1(z)|+ \cdots + |f_n(z)| \ge \weti(|z|), \quad z\in\cd.
\]
Hence, for $z\in\cd$, we have
\[
|g(z)| \weti(|\ph(z)|) \le \sum_{j=1}^n |g(z)||f_j(\ph(z))| = \sum_{j=1}^n |\cph^g f_j (z)| \in \YY(\cd).
\]
Therefore, \eqref{e_wco} holds by the definition of a lattice.
\end{proof}

\subsection{Carleson measures}\label{ss_carleson}
Let $0<q<\infty$ and let $X\subset\hol(\cd)$ be a linear space.
A positive Borel measure $\mu$ on $\cd$ is called $q$-Carleson for $X$ if $X\subset L^q(\cd, \mu)$.
Applying Theorem~\ref{t_wco} with $\YY(\cd)= L^q(\cd, \mu)$, $g\equiv 1$ and $\ph(z)\equiv z$,
we obtain the following result.

\begin{cory}\label{c_carleson}
Suppose that $\we: [0,+\infty) \to (0, +\infty)$ is a weight function,
$\mu$ is a positive Borel measure on $\cd$, and $0<q<\infty$.
Then $\mu$ is a $q$-Carleson measure for $\grw(\cd)$ if and only if
\begin{equation}\label{e_carleson}
\int_{\cd} \weti^q(|z|)\, d\mu(z) < \infty.
\end{equation}
\end{cory}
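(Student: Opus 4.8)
The plan is to obtain this as the identity-operator instance of Theorem~\ref{t_wco}, so the only real work is to verify that the target space is a lattice and then to translate the resulting condition.

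First I would check that, for a fixed positive Borel measure $\mu$ on $\cd$ and $0<q<\infty$, the space $\YY(\cd):=L^q(\cd,\mu)$ satisfies the lattice property. This is immediate: it is linear, and if $f,F:\cd\to\Cbb$ are continuous with $|f(z)|\le|F(z)|$ for all $z$ and $F\in L^q(\cd,\mu)$, then monotonicity of the integral gives $\int_{\cd}|f|^q\,d\mu\le\int_{\cd}|F|^q\,d\mu<\infty$, whence $f\in L^q(\cd,\mu)$.

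Next I would specialize Theorem~\ref{t_wco} to $g\equiv1$ and $\ph(z)\equiv z$. Under these choices $\cph^g f(z)=g(z)f(\ph(z))=f(z)$, so $\cph^g$ is simply the inclusion map. Consequently, the assertion that $\cph^g$ maps $\grw(\cd)$ into $L^q(\cd,\mu)$ is exactly the statement $\grw(\cd)\subset L^q(\cd,\mu)$, i.e.\ that $\mu$ is $q$-Carleson for $\grw(\cd)$. With the same substitutions the governing condition \eqref{e_wco} reads $\weti(|z|)\in L^q(\cd,\mu)$, which is precisely the finiteness of $\int_{\cd}\weti^q(|z|)\,d\mu(z)$ appearing in \eqref{e_carleson}; here $\weti(|z|)$ is continuous, so it lies in the class of functions to which the lattice criterion applies. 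Invoking the equivalence of Theorem~\ref{t_wco} then yields both implications at once.

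I do not expect a genuine obstacle in this argument: all of the analytic content---namely the equivalence between mapping into a lattice and pointwise domination by $|g|\,\weti(|\ph|)$---is already packaged in Theorem~\ref{t_wco}, which itself rests on the approximation result Theorem~\ref{t_TropApp}. The remaining task is purely one of bookkeeping: recognizing the Carleson condition as the identity-operator case and confirming that $L^q(\cd,\mu)$ is a lattice.
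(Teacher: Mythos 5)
Your proposal is correct and follows exactly the paper's route: the authors also obtain Corollary~\ref{c_carleson} by applying Theorem~\ref{t_wco} with $\YY(\cd)=L^q(\cd,\mu)$, $g\equiv 1$ and $\ph(z)\equiv z$. Your additional verification that $L^q(\cd,\mu)$ satisfies the stated lattice property is the only detail the paper leaves implicit, and it is carried out correctly.
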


\subsubsection{Fock--Sobolev Carleson measures}
The Fock type spaces $\fk_{1, \beta}^p(\cd)$, $\beta\in\Rbb$, $0<p\le\infty$, are introduced in
\cite{CCK15}. In particular, $\fk_{1, \beta}^\infty(\cd) = \mathcal{A}^{\we_{1,\beta}}(\cd)$, where
$\we_{1,\beta}(t) = (1+t)^\beta e^{\frac{1}{2}t^2}$, $0\le t< +\infty$.
As shown in \cite{CCK15}, any Fock--Sobolev space $\fk_{1, \beta, s}^p(\cd)$ of fractional order $s\in\Rbb$
coincides with $\fk_{1, \beta-s}^p(\cd)$, $0<p\le \infty$, $\beta\in\Rbb$.
This fact allows to extend most of the results of the present paper to $\fk_{1, \beta}^\infty(\cd)$, $\beta\in\Rbb$.
We restrict our attention to characterizations of $q$-Carleson measures for $\fk_{1, \beta}^p(\cd)$.
For $0<p<\infty$, the problem is solved in \cite{CCK15}. So, we consider the case where $p=\infty$.

\begin{cory}\label{c_carl_f_sobolev}
  Suppose that $d\ge 1$, $\beta\in\Rbb$, $0<q<\infty$ and $\mu$ is a positive Borel measure on $\cd$.
  Then $\mu$ is a $q$-Carleson measure for $\fk_{1, \beta}^\infty(\cd)$ if and only if
\[
\int_{\cd} (1+|z|^\beta)^q e^{\frac{q}{2}|z|^2}\, d\mu(z) < \infty.
\]
\end{cory}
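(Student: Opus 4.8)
The plan is to deduce the statement from the general $q$-Carleson criterion in Corollary~\ref{c_carleson}, the only genuine work being the computation of the associated weight of the Fock--Sobolev weight. Since $\fk_{1,\beta}^\infty(\cd)=\mathcal{A}^{\we_{1,\beta}}(\cd)$ with $\we_{1,\beta}(t)=(1+t)^\beta e^{\frac12 t^2}$, an application of Corollary~\ref{c_carleson} to $\we=\we_{1,\beta}$ shows at once that $\mu$ is $q$-Carleson for $\fk_{1,\beta}^\infty(\cd)$ if and only if $\int_\cd \widetilde{\we_{1,\beta}}^{\,q}(|z|)\,d\mu(z)<\infty$. Hence everything comes down to identifying $\widetilde{\we_{1,\beta}}$ up to equivalence.

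The main step is to show that $\we_{1,\beta}$ is essential, so that $\widetilde{\we_{1,\beta}}\asymp\we_{1,\beta}$; here I would invoke Theorem~\ref{t_ess_suff}. Computing the logarithmic transform gives $\Phi_{\we_{1,\beta}}(x)=\beta\log(1+e^x)+\tfrac12 e^{2x}$, whence $\Phi_{\we_{1,\beta}}''(x)=\beta\, e^x(1+e^x)^{-2}+2e^{2x}$. The first summand is bounded while the second tends to $+\infty$, so $\liminf_{x\to+\infty}\Phi_{\we_{1,\beta}}''(x)=+\infty>0$ and $\Phi_{\we_{1,\beta}}$ is convex for all large $x$. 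Since being essential depends only on the behaviour of the logarithmic transform at $+\infty$, Theorem~\ref{t_ess_suff} yields that $\we_{1,\beta}$ is essential. (For $\beta<0$ the function $\we_{1,\beta}$ is not non-decreasing near the origin, so I would first pass to an equivalent genuine weight function agreeing with it at infinity, which alters neither $\fk_{1,\beta}^\infty(\cd)$ nor $\widetilde{\we_{1,\beta}}$.)

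With $\widetilde{\we_{1,\beta}}(t)\asymp(1+t)^\beta e^{\frac12 t^2}$ in hand, the Carleson integral above becomes $\int_\cd (1+|z|)^{\beta q}e^{\frac q2|z|^2}\,d\mu(z)$, and up to the elementary equivalence of the polynomial prefactor $(1+t)^\beta$ with $1+t^\beta$ this is precisely the condition in the statement. I expect the essentiality verification, i.e. the identification $\widetilde{\we_{1,\beta}}\asymp\we_{1,\beta}$, to be the only point requiring care; the reduction through Corollary~\ref{c_carleson} and the comparison of the polynomial factors are then routine.
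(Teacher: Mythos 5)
Your argument is exactly the paper's proof: compute $\Phi_{\we_{1,\beta}}(x)=\beta\log(1+e^x)+\frac12 e^{2x}$, check condition \eqref{e_ess_suff}, invoke Theorem~\ref{t_ess_suff} to get $\widetilde{\we_{1,\beta}}\asymp\we_{1,\beta}$, and apply Corollary~\ref{c_carleson}; the extra details you supply (the explicit second derivative, the monotonicity fix near the origin for $\beta<0$) are correct and are simply left implicit in the paper. One caveat: your closing claim that $(1+t)^\beta\asymp 1+t^\beta$ fails for $\beta<0$ (as $t\to+\infty$ the left side tends to $0$ and the right side to $1$), so the condition your argument actually establishes is $\int_{\cd}(1+|z|)^{\beta q}e^{\frac{q}{2}|z|^2}\,d\mu(z)<\infty$; the expression $(1+|z|^\beta)^q$ in the stated corollary is evidently a misprint for this, and you should not paper over the discrepancy with an equivalence that does not hold.
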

\begin{proof}
  The logarithmic transform $\Phi_{\we_{1, \beta}}(x) = \beta\log(1+ e^x) + \frac{1}{2}e^{2x}$, $-\infty< x < +\infty$,
  clearly satisfies condition \eqref{e_ess_suff}. So, it suffices to apply Theorem~\ref{t_ess_suff}
  and Corollary~\ref{c_carleson}.
\end{proof}

\subsubsection{Carleson measures for $\fk_\al^\infty(\cd)$}
We have $\fk_\al^\infty(\cd) = \mathcal{A}^{\we_\al}(\cd)$
with $\we_\al(t)  = e^{\frac{\al}{2}t^2}$, $\al>0$, $0\le t < \infty$.
So, Theorem~\ref{t_ess_suff} and Corollary~\ref{c_carleson}
guarantee that $\mu$ is a $q$-Carleson measure for $\fk_\al^\infty(\cd)$, $d\ge 1$,
if and only if
\[
\int_{\cd} e^{\frac{\al q}{2}|z|^2}\, d\mu(z) < \infty.
\]
For $d=1$, the above condition was obtained in \cite{Men13IEOT} by a different method.

\subsection{Volterra type operators}
Recall that the Volterra type operator $\vtgp$
is defined as
\[
(\vtgp f)(z)= \int_0^1 f(\ph(tz)) \frac{\rad g(tz)}{t}\, dt, \quad f\in\hol(\cd),\ z\in\cd,
\]
where $g\in\hol(\cd)$ and $\ph:\cd\to \cd$ is a holomorphic map.
Direct calculations guarantee that
\begin{equation}\label{e_hu2003}
\rad\vtgp f (z) = f(\ph(z)) \rad g(z),\quad z\in\cd,
\end{equation}
for all $f,g\in \hol(\cd)$; see \cite{Hu03}.

\begin{cory}\label{c_rad_volt}
Suppose that $\we: [0,+\infty) \to (0, +\infty)$ is a weight function,
$g\in\hol(\cd)$, $\ph:\cd\to\cd$ is a holomorphic map,
and $\YY(\cd)$ is a lattice. Then $\rad\vtgp$
maps $\grw(\cd)$ into $\YY(\cd)$ if and only if
$|\rad g(z)| \weti (|\ph(z)|) \in\YY(\cd)$.
\end{cory}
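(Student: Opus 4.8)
The plan is to reduce the statement to Theorem~\ref{t_wco} by recognizing $\rad\vtgp$ as a weighted composition operator. First I would note that, since $g\in\hol(\cd)$, its radial derivative $\rad g(z)=\sum_{j=1}^d z_j\,\partial g/\partial z_j(z)$ is again an entire function on $\cd$. This is the one point that must be checked before proceeding, because Theorem~\ref{t_wco} requires the multiplier to be a legitimate holomorphic symbol; here $\rad g\in\hol(\cd)$ serves that role.

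Next I would invoke the identity \eqref{e_hu2003}, which gives $\rad\vtgp f(z)=f(\ph(z))\,\rad g(z)$ for every $f\in\hol(\cd)$. Rewriting the right-hand side as $(\rad g)(z)\,(f\circ\ph)(z)$, I recognize it as exactly $\cph^{\rad g} f(z)$, the weighted composition operator whose symbol is $\rad g$ rather than $g$. Hence $\rad\vtgp=\cph^{\rad g}$ as operators on $\hol(\cd)$, and in particular on $\grw(\cd)$. Note also that the product $(\rad g)\cdot(f\circ\ph)$ is holomorphic, hence continuous, so $\rad\vtgp f$ lies in the class of continuous functions to which the lattice definition of $\YY(\cd)$ applies.

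Finally, I would apply Theorem~\ref{t_wco} verbatim, with $\rad g$ substituted for $g$: the operator $\cph^{\rad g}$ maps $\grw(\cd)$ into the lattice $\YY(\cd)$ if and only if $|\rad g(z)|\,\weti(|\ph(z)|)\in\YY(\cd)$. Since $\rad\vtgp=\cph^{\rad g}$, this is precisely the asserted criterion, completing the proof. Because the whole argument is a direct specialization of Theorem~\ref{t_wco} through the computation \eqref{e_hu2003}, there is no genuine obstacle; the only care needed is the bookkeeping observation that $\rad g$ is holomorphic, so that the hypotheses of Theorem~\ref{t_wco} are met with the new symbol.
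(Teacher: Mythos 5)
Your proposal is correct and follows exactly the paper's argument: the paper's proof is the one-line observation that \eqref{e_hu2003} identifies $\rad\vtgp$ with the weighted composition operator $\cph^{\rad g}$, after which Theorem~\ref{t_wco} applies with $\rad g$ as the symbol. Your additional remark that $\rad g\in\hol(\cd)$ is a sensible bit of bookkeeping, but the route is the same.
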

\begin{proof}
It suffices to apply \eqref{e_hu2003} and Theorem~\ref{t_wco}.
\end{proof}

\subsection{Compact operators}
Characterizations of the compact operators considered in Theorem~\ref{t_wco}
and its corollaries depend on the structure of the corresponding lattice $\YY(\cd)$.
If $\YY(\cd)$ is an $L^\infty$ space, then, as indicated in Section~\ref{ss_fock},
the little-oh versions of the boundedness conditions describe the corresponding compact operators.
If $\YY(\cd)= L^q(\cd, \mu)$ with $0<q<\infty$, then the bounded operators under consideration
are automatically compact.
For example, given $0<q<\infty$ and a positive Borel measure $\mu$ on $\cd$,
standard arguments show that the identity operator $\mathrm{Id}: \grw(\cd)\to L^q(\cd, \mu)$
is compact if and only if \eqref{e_carleson} holds.
So, in what follows, we restrict our attention to the boundedness conditions.

\section{Operators on the growth Fock spaces}\label{s_fk}
\subsection{Corollary~\ref{c_volt_grw_to_fk} and related results}
\begin{proof}[Proof of Corollary~\ref{c_volt_grw_to_fk}]
By Theorem~\ref{t_fk_eqnorms}, property~(i) from Corollary~\ref{c_volt_grw_to_fk} holds
if and only if
\[
\frac{(\rad\vtgp f)(z)}{(1+|z|)^2} e^{-\frac{\al}{2} |z|^2}
\in L^q(\cd, \lbd)
\]
for all $f\in\grw(\cd)=\mathcal{A}^{\weti}(\cd)$.
Let $\YY(\cd)$ consist of those measurable $F:\cd\to\Cbb$ for which
\[
\frac{|F(z)|}{(1+|z|)^2} e^{-\frac{\al}{2} |z|^2}
\in L^q(\cd, \lbd).
\]
Corollary~\ref{c_rad_volt} guarantees that (i) is equivalent
to the following property:
\begin{equation}\label{e_volt_grw_to_fk}
\frac{|\rad g(z)| \weti(|\ph(z)|)}{(1+|z|)^2} e^{-\frac{\al}{2} |z|^2} \in L^q(\cd, \lbd),
\end{equation}
as required.
\end{proof}

\subsubsection{Berezin type characterizations}
The normalized kernel function $k_{w, \al}$ for the Hilbert space $\fk_\al^2(\cd)$ is given by the identity
\begin{equation}\label{e_berezin_ker}
k_{w, \al}(z) = e^{\al\langle z, w \rangle - \al |w|^2 /2}, \quad z,w \in\cd.
\end{equation}
To study $\cph^g$, $\vot_g^\ph$ and related operators on Fock type spaces,
Ueki \cite{U07} and Mengestie \cite{Men13IEOT, Men14JGA, Men16pota} introduce
Berezin type integral transforms with the help of
$k_{w, \al}(z)$.
For example, the bounded (compact) Volterra type operators
$\vtgp: \fk_\al^\infty(\Cbb) \to \fk_\al^q(\Cbb)$
are characterized by the following theorem.

\begin{theorem}[{\cite[Theorem~2.3]{Men13IEOT}}]\label{t_men_ieot23}
Assume that $g, \ph\in\hol(\Cbb)$, $\alpha>0$ and $0<q<\infty$.
Then the following properties are equivalent:
\begin{itemize}
  \item [(i)] $\vtgp: \fk_\al^\infty(\Cbb) \to \fk_\al^q(\Cbb)$ is a bounded operator;
  \item [(ii)] $\vtgp: \fk_\al^\infty(\Cbb) \to \fk_\al^q(\Cbb)$ is a compact operator;
  \item [(iii)]
  \[
  \int_{\Cbb}\int_{\Cbb} \frac{|k_{w, \al}(\ph(z)) g^\prime(z)|^q}{(1+|z|)^q} e^{-\frac{\al q}{2} |z|^2}
  \, d\lbarea(z) \, d\lbarea(w) < \infty.
  \]
\end{itemize}
\end{theorem}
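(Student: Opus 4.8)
The plan is to obtain this theorem as a special case of Corollary~\ref{c_volt_grw_to_fk}, after reducing the double integral in~(iii) to the explicit one-variable condition produced there. First I would specialize Corollary~\ref{c_volt_grw_to_fk} to $d=1$ and $\we = \we_\al$, where $\we_\al(t) = e^{\frac{\al}{2}t^2}$, so that $\mathcal{A}^{\we_\al}(\Cbb) = \fk_\al^\infty(\Cbb)$. Since $\Phi_{\we_\al}(x) = \frac{\al}{2}e^{2x}$ gives $\Phi_{\we_\al}^{\prime\prime}(x) = 2\al e^{2x}\to+\infty$, Theorem~\ref{t_ess_suff} shows that $\we_\al$ is essential, hence the associated weight satisfies $\weti(|z|)\asymp e^{\frac{\al}{2}|z|^2}$. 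Using $\rad g(z) = z\,g^\prime(z)$ for $d=1$, Corollary~\ref{c_volt_grw_to_fk} then asserts that~(i) holds if and only if
\[
\frac{|z\, g^\prime(z)|}{(1+|z|)^2}\, e^{-\frac{\al}{2}|z|^2}\, e^{\frac{\al}{2}|\ph(z)|^2}\in L^q(\Cbb,\lbarea).
\]

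Next I would evaluate the inner integral in~(iii). By~\eqref{e_berezin_ker} with $\langle \ph(z), w\rangle = \ph(z)\overline{w}$, one has $|k_{w,\al}(\ph(z))|^q = e^{q\al\,\RRe(\ph(z)\overline{w}) - \frac{q\al}{2}|w|^2}$, and completing the square in this Gaussian integral yields
\[
\int_{\Cbb} |k_{w,\al}(\ph(z))|^q\, d\lbarea(w) = \frac{2\pi}{q\al}\, e^{\frac{q\al}{2}|\ph(z)|^2}.
\]
Substituting this identity into~(iii) and absorbing the constant $\frac{2\pi}{q\al}$, I would conclude that~(iii) is equivalent to
\[
\frac{|g^\prime(z)|}{1+|z|}\, e^{-\frac{\al}{2}|z|^2}\, e^{\frac{\al}{2}|\ph(z)|^2}\in L^q(\Cbb,\lbarea).
\]

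It then remains to compare the two integrability conditions, whose weights differ only through the factors $\frac{|z|}{(1+|z|)^2}$ and $\frac{1}{1+|z|}$. These are equivalent on $\{|z|\ge 1\}$, both being comparable to $|z|^{-1}$; on the compact disk $\{|z|\le 1\}$ the integrands of both conditions are continuous---here $g^\prime$ and $e^{\frac{\al}{2}|\ph|^2}$ are bounded since $g$ and $\ph$ are entire---so each contributes only a finite amount. Hence the two conditions hold simultaneously, giving (i)$\Leftrightarrow$(iii). Finally, the equivalence (i)$\Leftrightarrow$(ii) needs no separate argument: with target $L^q(\Cbb,\lbarea)$, $0<q<\infty$, boundedness already forces compactness, as recorded in the subsection on compact operators. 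The only delicate point---and it is routine---is the comparison in the last step: one must verify that the mismatch of the two weight factors near the origin cannot influence convergence, which rests on the local boundedness of the holomorphic data rather than on any growth estimate.
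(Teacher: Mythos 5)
Your proposal is correct, but note that the paper does not actually prove this statement: Theorem~\ref{t_men_ieot23} is quoted verbatim from \cite[Theorem~2.3]{Men13IEOT}, and the paper only uses it in the subsequent comparison subsection, where it observes that condition~(i) is equivalent (via Corollary~\ref{c_volt_grw_to_fk}, essentiality of $\we_\al$, and $\rad g(z)=zg^{\prime}(z)$) to the one-variable condition \eqref{e_volt_dim1}. What you have done is turn that comparison into a self-contained derivation of the theorem from the paper's own machinery, by adding the one missing computational link: the Tonelli--Gaussian evaluation $\int_{\Cbb}|k_{w,\al}(\ph(z))|^{q}\,d\lbarea(w)=\frac{2\pi}{q\al}e^{\frac{q\al}{2}|\ph(z)|^{2}}$, which collapses the double integral in~(iii) to \eqref{e_volt_dim1}. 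All the individual steps check out: $\Phi_{\we_\al}^{\prime\prime}(x)=2\al e^{2x}$ verifies \eqref{e_ess_suff} so that $\weti_\al\asymp\we_\al$; the interchange of integrals is legitimate because the integrand is nonnegative; and your handling of the mismatch between $\frac{|z|}{(1+|z|)^{2}}$ and $\frac{1}{1+|z|}$ near the origin (equivalence for $|z|\ge 1$, finiteness of both contributions on the unit disk by continuity) is exactly the point one must not gloss over. The only ingredient you import without proof is the implication bounded~$\Rightarrow$~compact for these operators with target realized as a weighted $L^{q}$ space, $0<q<\infty$; the paper itself only asserts this via ``standard arguments'' in the subsection on compact operators, so you are on the same footing as the authors there, but a fully self-contained write-up would need to supply that argument (e.g.\ via a normal-families/dominated-convergence scheme), since it is the one part of (i)$\Leftrightarrow$(ii)$\Leftrightarrow$(iii) that does not reduce to Corollary~\ref{c_volt_grw_to_fk} or to Theorem~\ref{t_fk_eqnorms}.
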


\subsubsection{Comparison of Corollary~\ref{c_volt_grw_to_fk} and Theorem~\ref{t_men_ieot23}}
We have $\fk_\al^\infty(\cd) = \mathcal{A}^{\we_\al}(\cd)$ with $\we_\al(t)= e^{\frac{\al}{2} t^2}$,
$\al>0$, $0\le t<+\infty$.
As indicated in Section~\ref{ss_carleson}, Theorem~\ref{t_ess_suff} guarantees that
$\weti_\al\asymp \we_\al$.
Hence, \eqref{e_volt_grw_to_fk} with $\we=\we_\al$ reads as
\[
\frac{|\rad g(z)|}{(1+|z|)^2} e^{\frac{\al}{2}(|\ph(z)|^2 - |z|^2)} \in L^q(\cd, \lbd).
\]
If $d=1$, then $\rad g(z) = zg^\prime(z)$, $z\in\Cbb$.
Thus, the following condition is equivalent to properties (i--iii) from Theorem~\ref{t_men_ieot23}:
\begin{equation}\label{e_volt_dim1}
\frac{|g^\prime (z)|}{1+|z|} e^{\frac{\al}{2}(|\ph(z)|^2 - |z|^2)} \in L^q(\Cbb, \lbarea).
\end{equation}
If $g(z)$ is a constant, then $\vtgp={0}$.
If $g(z)$ is not a constant, then \eqref{e_volt_dim1} allows to make certain explicit
conclusions about those symbols $g$ and $\ph$ for which
$\vtgp: \fk_\al^\infty(\Cbb)\to \fk_\al^q(\Cbb)$, $0<q<\infty$,
is bounded or, equivalently, compact.

\begin{cory}
Let $g,\ph\in\hol(\Cbb)$, $\al>0$ and $0<q<\infty$.
\begin{itemize}
  \item [(i)]
Assume that $g(z)$ grows as a power function of degree at least $2$ and
$\vtgp: \fk_\al^\infty(\Cbb)\to \fk_\al^q(\Cbb)$ is bounded.
Then $\ph(z) = \beta z+ \gamma$ with $|\beta|<1$.
  \item [(ii)]
Assume that $g(z)=az+b$, $a\neq 0$, and $0<q\le 2$. Then
$\vtgp: \fk_\al^\infty(\Cbb)\to \fk_\al^q(\Cbb)$ is bounded (compact) if and only if
$\ph(z) = \beta z+ \gamma$ with $|\beta|<1$.
  \item [(iii)]
Assume that $g(z)=az+b$, $a\neq 0$, and $2<q<\infty$. Then
$\vtgp: \fk_\al^\infty(\Cbb)\to \fk_\al^q(\Cbb)$ is bounded (compact) if and only if
$\ph(z) = \beta z+ \gamma$ with $|\beta|\le 1$.
\end{itemize}
\end{cory}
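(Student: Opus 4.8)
The plan is to read all three parts off the integrability criterion \eqref{e_volt_dim1}. For $g(z)=az+b$ with $a\neq0$ we have $g'\equiv a$, so boundedness of $\vtgp\colon\fk_\al^\infty(\Cbb)\to\fk_\al^q(\Cbb)$ is equivalent to
\[
I(\ph):=\int_\Cbb \frac{|a|^q}{(1+|z|)^q}\, e^{\frac{\al q}{2}(|\ph(z)|^2-|z|^2)}\, d\lbarea(z)<\infty .
\]
In case (i) the hypothesis that $g$ grows like a power of degree $\ge2$ means (an entire function of polynomial growth being a polynomial) that $g$ is a polynomial of degree $m\ge2$, so $|g'(z)|/(1+|z|)\asymp|z|^{m-2}$ is bounded below by a positive constant for large $|z|$. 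Thus in every part the question reduces to the convergence of an integral $\int_\Cbb \Psi(z)\, e^{\frac{\al q}{2}(|\ph(z)|^2-|z|^2)}\,d\lbarea(z)$ in which the fixed weight $\Psi$ is either bounded below (case (i)) or decays only polynomially, $\Psi(z)=|a|^q(1+|z|)^{-q}$ (cases (ii), (iii)).

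The heart of the matter, and the step I expect to be the main obstacle, is to show that convergence forces $\ph$ to be affine with small linear part. I would isolate this as a lemma: \emph{if $M(r):=\max_{|z|=r}|\ph(z)|$ satisfies $M(r)^2-r^2\to+\infty$ along a sequence $r_k\to\infty$, then $I(\ph)=\infty$.} The proof uses that $e^{\frac{\al q}{2}|\ph(z)|^2}$ is subharmonic (the convex increasing $s\mapsto e^{\frac{\al q}{2}s}$ composed with the subharmonic $|\ph|^2$), so the sub-mean value inequality over a unit disk centered at a point $z_r$ with $|z_r|=r$ and $|\ph(z_r)|=M(r)$ gives $\int_{D(z_r,1)} e^{\frac{\al q}{2}|\ph|^2}\,d\lbarea\ge \pi\, e^{\frac{\al q}{2}M(r)^2}$. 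Combining this with the crude bounds $\Psi(z)\ge\mathrm{const}\cdot(2+r)^{-q}$ and $e^{-\frac{\al q}{2}|z|^2}\ge e^{-\frac{\al q}{2}(1+r)^2}$ valid on $D(z_r,1)$, and summing over a subsequence for which the disks $D(z_{r_k},1)$ are disjoint, the local contributions are bounded below by $\mathrm{const}\cdot(2+r_k)^{-q}\,e^{\frac{\al q}{2}(M(r_k)^2-(1+r_k)^2)}\to\infty$, so $I(\ph)=\infty$. The point is that $M(r)^2-r^2\to+\infty$ holds for every non-affine entire $\ph$ (a polynomial of degree $m\ge2$ gives $M(r)^2\sim|a_m|^2r^{2m}$, while a transcendental $\ph$ grows faster than any power of $r$), and also for affine $\ph(z)=\beta z+\gamma$ with $|\beta|>1$. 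Hence boundedness of $\vtgp$ already forces $\ph(z)=\beta z+\gamma$ with $|\beta|\le1$, which is the necessity half of all three statements.

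It then remains to analyze the affine case, where
\[
|\ph(z)|^2-|z|^2=(|\beta|^2-1)|z|^2+2\RRe(\beta\bar\gamma z)+|\gamma|^2 .
\]
If $|\beta|<1$ the term $(|\beta|^2-1)|z|^2\to-\infty$ dominates, the exponential decays like $e^{-\delta|z|^2}$, and $I(\ph)<\infty$ irrespective of $\Psi$; this yields boundedness in all parts. If $|\beta|=1$ the quadratic term disappears and the exponent is the affine function $2\RRe(\beta\bar\gamma z)+|\gamma|^2$: when $\gamma\neq0$ it is comparable to $+|z|$ on a half-plane, the integrand grows exponentially there, and $I(\ph)=\infty$; when $\gamma=0$ (so $\ph(z)=\beta z$) the exponent vanishes and $I(\ph)=|a|^q\int_\Cbb(1+|z|)^{-q}\,d\lbarea\asymp\int_0^\infty r(1+r)^{-q}\,dr$, which is finite exactly when $q>2$. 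This is precisely where the dichotomy appears: for $0<q\le2$ every $|\beta|=1$ map diverges, leaving $|\beta|<1$ as the criterion, which is part (ii); for $2<q<\infty$ the maps $\ph(z)=\beta z$ with $|\beta|=1$ become admissible, giving the condition $|\beta|\le1$ of part (iii), with the understanding that at the boundary $|\beta|=1$ one must have $\gamma=0$, the maps $|\beta|=1$, $\gamma\neq0$ being excluded by the half-plane growth just noted. Finally, in part (i) the weight $\Psi$ is bounded below rather than decaying, so even $\ph(z)=\beta z$ with $|\beta|=1$ gives $I(\ph)\ge\mathrm{const}\int_{|z|>R}d\lbarea=\infty$; thus only $|\beta|<1$ survives, as asserted.

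The compactness assertions in (ii) and (iii) require no additional work: as recorded in Section~\ref{s_basics}, every bounded operator from $\grw(\Cbb)$ into a weighted $L^q$ space with $0<q<\infty$ is automatically compact, so ``bounded'' and ``compact'' coincide here and the boundedness analysis above settles both.
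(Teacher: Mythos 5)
Your proposal is correct, and it follows the route the paper itself intends: the paper gives no argument for this corollary beyond the remark that it ``follows from \eqref{e_volt_dim1}'', and you carry out exactly that analysis, supplying the missing necessity step (sub-mean-value inequality for the subharmonic function $e^{\frac{\al q}{2}|\ph|^2}$ on unit disks centered at maximum points, which rules out non-affine $\ph$ and affine $\ph$ with $|\beta|>1$) and the elementary computation in the affine case. Two remarks. First, your isolated lemma is stated with the hypothesis $M(r)^2-r^2\to+\infty$, but the sub-mean-value estimate only yields divergence when $M(r_k)^2-(1+r_k)^2-\tfrac{2}{\al}\log(2+r_k)\to+\infty$; this covers all the cases to which you actually apply the lemma (where the excess is at least quadratic in $r$), and the remaining affine case $|\beta|=1$, $\gamma\ne 0$ is handled by your separate half-plane argument, so the proof is complete --- but the lemma as phrased slightly overstates what its proof gives. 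Second, your observation about part (iii) is a genuine and worthwhile catch: for $|\beta|=1$ and $\gamma\neq 0$ the exponent $2\RRe(\beta\bar\gamma z)+|\gamma|^2$ makes the integrand grow exponentially on a sector, so \eqref{e_volt_dim1} fails for every $q$, and the correct characterization for $2<q<\infty$ is ``$|\beta|<1$, or $|\beta|=1$ and $\gamma=0$'' rather than the bare ``$|\beta|\le 1$'' printed in the corollary; the statement as written is accurate only in its necessity direction.
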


\subsection{Corollary~\ref{c_cmpg} and related results}
\begin{proof}[Proof of Corollary~\ref{c_cmpg}]
Observe that $\cmpg f (z) = \int_0^{\ph(z)} f(\xi) g^\prime(\xi)\, d\xi$, $z\in\Cbb$.
So, Theorem~\ref{t_fk_eqnorms} guarantees that $\cmpg$ maps $\grw(\Cbb)$ into $\fk_\al^q(\Cbb)$
if and only if
\[
\frac{|f(\ph(z)) g^\prime(\ph(z)) \ph^\prime(z)|}{1+|z|} e^{-\frac{\al}{2} |z|^2}
\in L^q(\Cbb, \lbarea)\quad \textrm{for all\ } f\in\mathcal{A}^{\weti}(\Cbb).
\]
Since $\weti$ is essential, the above condition is equivalent
to the following one:
\begin{equation}\label{e_cmpg}
\frac{|g^\prime(\ph(z)) \ph^\prime(z)|}{1+|z|} \weti(|\ph(z)|) e^{-\frac{\al}{2} |z|^2} \in L^q(\Cbb, \lbarea)
\end{equation}
by Theorem~\ref{t_wco}. The proof of Corollary~\ref{c_cmpg} is finished.
\end{proof}

\subsubsection{Berezin type characterizations}
By \cite[Theorem~2.4]{Men13IEOT}, the operator
$\cmpg: \fk_\al^\infty(\Cbb)\to\fk_\al^q(\Cbb)$, $0<q<\infty$, is bounded (compact) if and only if
\begin{equation}\label{e_cmpg_men}
\int_{\Cbb} \int_{\Cbb}
\frac{|k_{w,\al}(\ph(z)) g^\prime(\ph(z)) \ph^\prime(z)|^q}{\left((1+|z|) e^{\frac{\al}{2}|z|^2} \right)^q}
\,d\lbarea(z) \,d\lbarea(w) < \infty,
\end{equation}
where $k_{w, \al}(z)$ is defined by \eqref{e_berezin_ker}.

\subsubsection{Comparison of conditions \eqref{e_cmpg} and \eqref{e_cmpg_men}}
For $\we_\al(t)  = e^{\frac{\al}{2}t^2}$, $\al>0$, $0\le t < \infty$,
condition \eqref{e_cmpg} rewrites as
\begin{equation}\label{e_cmpg_fk}
\frac{|g^\prime(\ph(z)) \ph^\prime(z)|}{1+|z|} e^{\frac{\al}{2}(|\ph(z)|^2 - |z|^2)} \in L^q(\Cbb, \lbarea).
\end{equation}
In particular, \eqref{e_cmpg_men} and \eqref{e_cmpg_fk} are equivalent.
If $g$ is a constant, then $\kom_g^\ph={0}$.
Using \eqref{e_cmpg_fk}, we also make the following explicit conclusions.

\begin{cory}
Let $g,\ph\in\hol(\Cbb)$, $\al>0$ and $0<q<\infty$.
\begin{itemize}
  \item [(i)]
Assume that $g(z)$ is not a constant and
$\kom_g^\ph: \fk_\al^\infty(\Cbb)\to \fk_\al^q(\Cbb)$ is bounded.
Then $\ph(z) = \beta z+ \gamma$ with $|\beta|<1$.
  \item [(ii)]
Assume that $g(z)=az+b$, $a\neq 0$. Then
$\kom_g^\ph: \fk_\al^\infty(\Cbb)\to \fk_\al^q(\Cbb)$ is bounded (compact) if and only if
$\ph(z) = \beta z+ \gamma$ with $|\beta|<1$.
\end{itemize}
\end{cory}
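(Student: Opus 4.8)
The plan is to deduce everything from an explicit weighted $L^q$ integrability criterion for $\kom_g^\ph=\kom_g\circ\cph$ and then to read off (i) and (ii) by elementary growth theory. First I would record the criterion, obtained exactly as in the proofs of Corollaries~\ref{c_volt_grw_to_fk} and~\ref{c_cmpg}. Since $\rad\kom_g f=g\,\rad f$, for $d=1$ one has $\rad(\kom_g^\ph f)(z)=z\,f'(\ph(z))\,\ph'(z)\,g(z)$, so Theorem~\ref{t_fk_eqnorms} shows that $\kom_g^\ph$ maps $\grw(\Cbb)=\mathcal A^{\weti}(\Cbb)$ into $\fk_\al^q(\Cbb)$ if and only if
\[
\frac{|f'(\ph(z))|\,|g(z)\,\ph'(z)|}{1+|z|}\,e^{-\frac{\al}{2}|z|^2}\in L^q(\Cbb,\lbarea)
\quad\text{for all } f \text{ in the unit ball of } \mathcal A^{\weti}(\Cbb).
\]
In contrast with Theorem~\ref{t_wco}, here $f$ enters through its derivative, so taking the supremum over the unit ball replaces $|f'(\ph(z))|$ by $\sup\{|f'(\ph(z))|:\|f\|_{\mathcal A^{\weti}}\le1\}$; for the Fock weight $\we_\al$ the Cauchy estimates together with the extremal kernels $k_{w,\al}$ from~\eqref{e_berezin_ker} give this supremum as $\asymp(1+|\ph(z)|)\,e^{\frac{\al}{2}|\ph(z)|^2}$. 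Using $\weti_\al\asymp\we_\al$ (Theorem~\ref{t_ess_suff}), the criterion becomes $\Psi\in L^q(\Cbb,\lbarea)$ with
\[
\Psi(z)=\frac{(1+|\ph(z)|)\,|g(z)\,\ph'(z)|}{1+|z|}\,e^{\frac{\al}{2}(|\ph(z)|^2-|z|^2)}.
\]
The decisive new feature, absent from~\eqref{e_volt_dim1} and~\eqref{e_cmpg_fk}, is the factor $1+|\ph(z)|$ produced by differentiating $f$; it is exactly what removes the $q>2$ borderline case seen for $\vtgp$.

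For part~(i) I would first argue that boundedness forces $\ph$ to be affine. If $\ph$ is not a polynomial of degree at most one, then $|\ph(z)|/|z|\to\infty$, and $|\ph(z)|^2-|z|^2\to+\infty$ faster than any multiple of $\log|z|$ on a subset of $\Cbb$ of infinite measure; since $g\,\ph'\not\equiv0$ has only isolated zeros, the exponential growth of $e^{\frac{\al}{2}(|\ph(z)|^2-|z|^2)}$ forces $\Psi\notin L^q$. Quantifying this region---showing that $|\ph|$ exceeds $|z|$ on a set large enough to defeat integrability, which for transcendental $\ph$ requires controlling the thin set where $|\ph|$ is small---is the step I expect to be the main obstacle; it is precisely where the Fock-space growth estimates are needed. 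Granting it, $\ph(z)=\beta z+\gamma$, and the exponent of $\Psi$ equals $\frac{\al}{2}\bigl((|\beta|^2-1)|z|^2+2\RRe(\overline{\beta}\gamma z)+|\gamma|^2\bigr)$. The cases $|\beta|>1$ and $|\beta|=1,\ \gamma\neq0$ are immediately excluded, since then the exponent tends to $+\infty$---quadratically in the first case, linearly along a half-plane in the second---while $(1+|\ph(z)|)\,|g(z)\,\ph'(z)|$ stays bounded below by a positive constant off the zeros of $g\ph'$.

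It remains to exclude $|\beta|=1,\ \gamma=0$. Then $\ph(z)=\beta z$, so $|\ph(z)|=|z|$ and $|\ph'|=1$, the exponent vanishes, and the prefactor collapses to give $\Psi(z)=|g(z)|$; thus boundedness would force $\int_\Cbb|g|^q\,d\lbarea<\infty$. But $|g|^q$ is subharmonic, so the area sub-mean value inequality gives $|g(w)|^q\le\frac{1}{\pi r^2}\int_{|z-w|\le r}|g|^q\,d\lbarea\to0$ as $r\to\infty$ for every $w$, whence $g\equiv0$---contradicting that $g$ is non-constant. This yields $|\beta|<1$ and proves~(i). Part~(ii) follows at once: for $g(z)=az+b$ with $a\neq0$ the symbol is non-constant, so~(i) gives the forward implication; conversely, if $\ph(z)=\beta z+\gamma$ with $|\beta|<1$ then $\Psi(z)\lesssim(1+|z|)\,e^{-\frac{\al}{2}(1-|\beta|^2)|z|^2+O(|z|)}$, which lies in $L^q(\Cbb,\lbarea)$ by Gaussian decay. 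Finally, since $\fk_\al^q(\Cbb)$ with $0<q<\infty$ is a weighted $L^q$ lattice, the observation in Section~\ref{s_basics} that such bounded operators are automatically compact gives the parenthetical compactness statements with no additional argument.
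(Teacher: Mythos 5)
Your argument follows the paper's intended route (read the corollary off an explicit $L^q$ criterion for $\kom_g^\ph$), but the criterion you set up is wrong in two ways. First, you have misread the operator: in this paper $\kom_g^\ph$ is \emph{not} $\kom_g\circ\cph$ but is defined by $\kom_g^\ph f(z)=\int_0^z f^\prime(\ph(\xi))\,g(\xi)\,d\xi$, so $(\kom_g^\ph f)^\prime(z)=f^\prime(\ph(z))g(z)$ with no chain-rule factor; the correct criterion (Corollary~\ref{c_kom_gp}) is $\frac{1+|\ph(z)|}{1+|z|}\,|g(z)|\,e^{\frac{\al}{2}(|\ph(z)|^2-|z|^2)}\in L^q(\Cbb,\lbarea)$, without your $|\ph^\prime(z)|$. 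For affine $\ph$ this only changes a constant, but in part~(i) you must exclude non-affine $\ph$, and there the condition you analyze is neither implied by nor implies the true one. Second, the derivation itself is not sound for $0<q<\infty$: boundedness gives a uniform bound on $\|\kom_g^\ph f\|_{\fk_\al^q}$ over the unit ball, and you cannot pull the supremum over $f$ inside the $L^q$-norm to replace $|f^\prime(\ph(z))|$ by its pointwise extremal value $\asymp(1+|\ph(z)|)e^{\frac{\al}{2}|\ph(z)|^2}$ (that step is exactly why Mengestie's characterizations carry an extra integration in $w$). The paper avoids this by a finite-family argument: Theorem~\ref{t_ess_suff} shows $(1+t)e^{\frac{\al}{2}t^2}$ is essential, Theorem~\ref{t_TropApp} produces finitely many $h_1,\dots,h_n$ with $\sum_j|h_j(z)|\asymp(1+|z|)e^{\frac{\al}{2}|z|^2}$, and summing \eqref{e_kom_eqnorm} over the antiderivatives $f_j=\int_0^z h_j$ yields the lower bound.

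In the analysis of the criterion there are two further problems. The exclusion of non-affine $\ph$ is left as an acknowledged gap (``Granting it\dots''); since this is the one part of the corollary that does not follow by inspection from Corollary~\ref{c_kom_gp}, it cannot be waved through. And in the cases $|\beta|>1$ and $|\beta|=1,\ \gamma\neq0$ you assert that $(1+|\ph|)\,|g\ph^\prime|$ is bounded below by a positive constant off the zeros of $g\ph^\prime$; that is false for a general non-constant entire $g$ (e.g.\ $g(z)=e^{-Mz}$ decays in a half-plane without vanishing), and part~(i) allows such $g$. The repair is the subharmonicity argument you already deploy for $\gamma=0$: when $|\beta|=1$ the criterion reduces to $\int_{\Cbb}|h|^q\,d\lbarea<\infty$ with $h(z)=c\,g(z)e^{\al\beta\bar\gamma z}$ entire and $\not\equiv0$, which the sub-mean-value inequality forbids; when $|\beta|>1$ it suffices that $e^{\frac{\al q}{2}(|\beta|^2-1)|z|^2}\gtrsim1$ and $\int_{\Cbb}|g|^q\,d\lbarea=\infty$. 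Your treatment of $|\beta|=1,\ \gamma=0$, the sufficiency computation for $|\beta|<1$, and the appeal to automatic compactness for targets $L^q(\Cbb,\mu)$ with $q<\infty$ are all correct.
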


\subsection{Volterra companion integral operators}
Let $g\in\hol(\Cbb)$. Then the operator $\kom_g$, companion to $\vot_g$, is defined by
\[
\kom_g f(z) = \int_0^z f^\prime(\xi) g(\xi)\, d\xi, \quad \xi\in\Cbb.
\]
Companions of $\vtgp$ and $\cmpg$ are defined as
\[
  \kom_g^\ph f(z) =
\int_0^z f^\prime (\ph(\xi)) g(\xi)\, d\xi,\quad
  \kkom_{\ph, g} f(z) =
\int_0^{\ph(z)} f^\prime(\xi) g(\xi)\, d\xi, \quad z\in\Cbb.
\]
The bounded and compact operators $\kom_g^\ph, \kkom_{\ph, g}: \fk_\al^p(\Cbb)\to \fk_\al^q(\Cbb)$
are characterized in \cite{Men16pota} for all $0<p,q\le +\infty$ in terms of Berezin type transforms.
We apply Theorem~\ref{t_TropApp} to give more explicit descriptions for $p=\infty$.

\begin{cory}\label{c_kom_gp}
Assume that $g, \ph\in\hol(\Cbb)$, $\al>0$ and $0<q\le \infty$.
Then the following properties are equivalent:
\begin{itemize}
  \item [(i)] $\kom_g^\ph: \fk_\al^\infty(\Cbb)\to \fk_\al^q(\Cbb)$ is a bounded operator;
  \item [(ii)]
  \[
\frac{1+|\ph(z)|}{1+|z|} |g(z)| e^{\frac{\al}{2} (|\ph(z)|^2 - |z|^2)} \in L^q(\Cbb, \lbarea).
  \]
\end{itemize}
\end{cory}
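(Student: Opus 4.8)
The plan is to reduce the boundedness of $\kom_g^\ph$ to a weighted composition operator acting on the growth space generated by the derivatives of functions in $\fk_\al^\infty(\Cbb)$, and then to invoke Theorem~\ref{t_wco}. First I would differentiate under the integral sign: a direct computation gives $(\kom_g^\ph f)^\prime(z) = g(z)\, f^\prime(\ph(z))$, $z\in\Cbb$. Since $d=1$, we have $\rad F(z) = z F^\prime(z)$, so Theorem~\ref{t_fk_eqnorms} (applied with exponent $q$) shows that $\kom_g^\ph f\in\fk_\al^q(\Cbb)$ if and only if
\[
\frac{|z|\,|g(z)|\,|f^\prime(\ph(z))|}{(1+|z|)^2}\, e^{-\frac{\al}{2}|z|^2} \in L^q(\Cbb,\lbarea).
\]
The quantity on the left is continuous, and $\frac{|z|}{(1+|z|)^2}\asymp\frac{1}{1+|z|}$ as $|z|\to+\infty$, so the behaviour near the origin is irrelevant and this is equivalent to
\[
\frac{|g(z)|\,|f^\prime(\ph(z))|}{1+|z|}\, e^{-\frac{\al}{2}|z|^2} \in L^q(\Cbb,\lbarea).
\]

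The key step is to identify the range of the differentiation map on $\fk_\al^\infty(\Cbb)$. Set $u(t)=(1+t)e^{\frac{\al}{2}t^2}$, a weight function. Applying Theorem~\ref{t_fk_eqnorms} with $p=\infty$ together with the same equivalence $\frac{|z|}{(1+|z|)^2}\asymp\frac{1}{1+|z|}$, I get that $f\in\fk_\al^\infty(\Cbb)$ if and only if $f^\prime\in\mathcal{A}^{u}(\Cbb)$, with comparable norms. Moreover, given any $h\in\mathcal{A}^{u}(\Cbb)$, its antiderivative $z\mapsto\int_0^z h(\xi)\,d\xi$ lies in $\fk_\al^\infty(\Cbb)$ by the same criterion; hence $\{f^\prime: f\in\fk_\al^\infty(\Cbb)\}=\mathcal{A}^{u}(\Cbb)$. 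Consequently, property~(i) is equivalent to
\[
\frac{|g(z)|\,|h(\ph(z))|}{1+|z|}\, e^{-\frac{\al}{2}|z|^2}\in L^q(\Cbb,\lbarea)\quad\text{for all } h\in\mathcal{A}^{u}(\Cbb),
\]
the uniform bound in $h$ giving the boundedness of the operator.

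Finally I would recognise the last statement as the assertion that the weighted composition operator $\cph^g$ maps $\mathcal{A}^{u}(\Cbb)$ into the lattice $\YY(\Cbb)=\{F:\ \frac{|F(z)|}{1+|z|}e^{-\frac{\al}{2}|z|^2}\in L^q(\Cbb,\lbarea)\}$. Theorem~\ref{t_wco}, applied to the weight $u$, the symbol $\ph$ and the multiplier $g$, then yields that this holds if and only if $\frac{|g(z)|\,\widetilde{u}(|\ph(z)|)}{1+|z|}e^{-\frac{\al}{2}|z|^2}\in L^q(\Cbb,\lbarea)$, where $\widetilde{u}$ denotes the associated weight of $u$. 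To evaluate $\widetilde{u}$, I would note that the logarithmic transform $\Phi_u(x)=\log(1+e^x)+\frac{\al}{2}e^{2x}$ is $\mathcal{C}^2$, convex, and satisfies $\Phi_u^{\prime\prime}(x)\ge 2\al e^{2x}\to+\infty$; hence Theorem~\ref{t_ess_suff} shows that $u$ is essential, that is, $\widetilde{u}\asymp u$. Substituting $\widetilde{u}(|\ph(z)|)\asymp(1+|\ph(z)|)\,e^{\frac{\al}{2}|\ph(z)|^2}$ rewrites the condition as property~(ii).

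The main obstacle is the identification $\{f^\prime:f\in\fk_\al^\infty(\Cbb)\}=\mathcal{A}^{u}(\Cbb)$; the nontrivial half is surjectivity, namely that every entire $h$ of growth $\mathcal{A}^u$ is the derivative of a genuine $\fk_\al^\infty(\Cbb)$ function. This is precisely what is needed in the direction (i)$\Rightarrow$(ii): it allows Theorem~\ref{t_wco} (and, through it, the approximation result Theorem~\ref{t_TropApp}) to be applied to the \emph{shifted} weight $u$ rather than to $\we_\al$ itself, which is the feature distinguishing this companion operator from the operators treated in Corollaries~\ref{c_volt_grw_to_fk} and~\ref{c_cmpg}.
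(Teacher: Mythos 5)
Your proposal is correct and is essentially the paper's own argument: both hinge on the weight $(1+t)e^{\frac{\al}{2}t^2}$ being essential (via Theorem~\ref{t_ess_suff}), on Theorem~\ref{t_TropApp} to produce the test functions, and on the antiderivative construction to realize them as derivatives of $\fk_\al^\infty$ functions. The only difference is packaging: you route the two implications through Theorem~\ref{t_wco} after identifying $\{f^\prime: f\in\fk_\al^\infty(\Cbb)\}$ as a growth space, whereas the paper carries out the same pointwise estimate and approximation argument directly.
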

\begin{proof}
First, Theorem~\ref{t_fk_eqnorms} guarantees that property~(i) holds if and only if
\begin{equation}\label{e_kom_eqnorm}
\frac{|f^\prime(\ph(z)) g(z)|}{1+|z|} e^{-\frac{\al}{2} |z|^2} \in L^q(\Cbb, \lbarea)\quad\textrm{for all\ } f\in\fk_\al^\infty(\Cbb).
\end{equation}
If
$f\in\fk_\al^\infty(\Cbb)$, then $|f^\prime(\xi)| \le (1+|\xi|)e^{\frac{\al}{2} |\xi|^2}$, $\xi\in\Cbb$,
by Theorem~\ref{t_fk_eqnorms} with $p=\infty$. Therefore, (ii) implies (i).

Secondly, assume that (i) holds. Theorem~\ref{t_ess_suff} is applicable to the weight function
$\we_{\al,1}(t) = (1+t) e^{\frac{\al}{2}t^2}$, $0\le t<\infty$.
Hence, Theorem~\ref{t_TropApp} provides functions $h_1, \dots, h_n \in \hol(\Cbb)$
such that
\begin{equation}\label{e_deriv_we}
|h_1(z)|+ \dots + |h_n(z)| \asymp \we_{\al, 1}(|z|), \quad z\in\Cbb.
\end{equation}
Put
\[
f_j(z)=\int_0^z h_j(\xi)\, d\xi,\quad z\in\Cbb.
\]
Then $f_j^\prime = h_j$ and $f_j\in \fk_\al^\infty(\Cbb)$, $j=1,\dots, n$, by \eqref{e_deriv_we}
and Theorem~\ref{t_fk_eqnorms} with $p=\infty$.
So, putting $f=f_j$, $j=1, \dots, n$, in \eqref{e_kom_eqnorm},
taking the sum and using \eqref{e_deriv_we},
we conclude that (i) implies (ii).
\end{proof}


\begin{cory}
Let $\ph, g\in\hol(\Cbb)$, $\al>0$ and $0<q<\infty$.
Assume that $g(z)\not\equiv 0$ and
$\kkom_{\ph, g}: \fk_\al^\infty(\Cbb)\to \fk_\al^q(\Cbb)$ is bounded.
Then $\ph(z) = \beta z+ \gamma$ with $|\beta|<1$.
\end{cory}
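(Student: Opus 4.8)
The plan is to first convert the boundedness of $\kkom_{\ph,g}$ into an explicit integrability condition, exactly as in the proof of Corollary~\ref{c_kom_gp}, and then to read off the required rigidity of $\ph$ from that condition. By the chain rule, $(\kkom_{\ph,g}f)'(z)=f'(\ph(z))g(\ph(z))\ph'(z)$, so Theorem~\ref{t_fk_eqnorms} shows that the operator is bounded if and only if
\[
\frac{|f'(\ph(z))\,g(\ph(z))\ph'(z)|}{1+|z|}\,e^{-\frac{\al}{2}|z|^2}\in L^q(\Cbb,\lbarea)\quad\text{for all } f\in\fk_\al^\infty(\Cbb).
\]
The pointwise bound $|f'(\xi)|\le(1+|\xi|)e^{\frac{\al}{2}|\xi|^2}$ from Theorem~\ref{t_fk_eqnorms} with $p=\infty$ gives one direction, while the approximating functions $h_1,\dots,h_n$ furnished by Theorem~\ref{t_TropApp} for the weight $\we_{\al,1}(t)=(1+t)e^{\frac{\al}{2}t^2}$ (essential by Theorem~\ref{t_ess_suff}), with $f_j(z)=\int_0^z h_j$, give the other. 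This shows boundedness is equivalent to the condition
\[
\frac{(1+|\ph(z)|)\,|g(\ph(z))\ph'(z)|}{1+|z|}\,e^{\frac{\al}{2}(|\ph(z)|^2-|z|^2)}\in L^q(\Cbb,\lbarea),
\]
which I denote by $(*)$. If $\ph$ is constant the conclusion holds with $\beta=0$, so I assume $\ph$ nonconstant; then $\ph'\not\equiv0$ and $g\circ\ph\not\equiv0$, so $h:=g(\ph)\ph'\not\equiv0$ has only isolated zeros.

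Next I would extract the growth restriction on $\ph$. Writing $\ph=\sum_{n\ge 0}a_n z^n$ and using $\frac{1+|\ph|}{1+|z|}\ge\frac{1}{1+|z|}$, condition $(*)$ forces $\int_\Cbb\frac{|h|^q}{(1+|z|)^q}e^{\frac{\al q}{2}(|\ph|^2-|z|^2)}\,d\lbarea<\infty$. Passing to circular means $V(r)=\frac{1}{2\pi}\int_0^{2\pi}(\cdots)\,d\theta$, I would bound $V(r)$ below by Jensen's inequality for $\exp$ together with Parseval, $\frac{1}{2\pi}\int_0^{2\pi}|\ph(re^{i\theta})|^2\,d\theta=\sum_{n\ge 0}|a_n|^2 r^{2n}$, and Jensen's formula, $\frac{1}{2\pi}\int_0^{2\pi}\log|h(re^{i\theta})|\,d\theta\ge\log|c_m|$ for $r\ge1$ (here $c_m$ is the first nonzero Taylor coefficient of $h$). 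This yields
\[
V(r)\ge\frac{|c_m|^q}{(1+r)^q}\,\exp\left(\frac{\al q}{2}\left(\sum_{n\ge 0}|a_n|^2 r^{2n}-r^2\right)\right),\qquad r\ge1.
\]
Since $\int_1^\infty V(r)\,r\,dr<\infty$, any coefficient $a_n\ne0$ with $n\ge2$, or an affine $\ph$ with $|a_1|>1$, makes the exponent tend to $+\infty$ and the integral diverge. Hence $\ph(z)=\beta z+\gamma$ with $\beta=a_1$, $\gamma=a_0$ and $|\beta|\le1$.

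The last, and genuinely delicate, step is to rule out $|\beta|=1$; this is the step I expect to be the main obstacle. When $|\beta|=1$ one has $|\ph(z)|^2-|z|^2=2\RRe(\bar\gamma\beta z)+|\gamma|^2$, which is only linear in $z$, so the Gaussian factors cancel and the crude lower bound above degenerates to $\sim(1+r)^{-q}$, which is integrable for $q>2$; the AM--GM estimate is therefore inconclusive and a separate argument is required. I would instead use $(*)$ \emph{exactly}: since for $|\beta|=1$ the factor $\frac{1+|\ph|}{1+|z|}$ is bounded above and below, the change of variables $\xi=\beta z+\gamma$ reduces $(*)$ to
\[
\int_\Cbb|g(\xi)|^q\,e^{\al q\,\RRe(\bar\gamma\xi)}\,d\lbarea(\xi)<\infty.
\]
Now the weight $e^{\al q\RRe(\bar\gamma\xi)}$ has circular mean $\ge1$ over every circle centered at the origin (the mean of $\RRe(\bar\gamma\xi)$ vanishes there), while Jensen's formula gives $\frac{1}{2\pi}\int_0^{2\pi}\log|g(\rho e^{i\theta})|\,d\theta\ge\log|c'_m|$ for $\rho\ge1$, with $c'_m$ the first nonzero coefficient of $g$. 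Applying Jensen's inequality for $\exp$ once more shows the circular mean of the integrand is $\ge|c'_m|^q$, whence the integral is at least $2\pi|c'_m|^q\int_1^\infty\rho\,d\rho=\infty$, a contradiction. This forces $|\beta|<1$ and finishes the proof. Throughout, the two points needing care are the possibly vanishing coefficient $h=g(\ph)\ph'$ (handled uniformly by Jensen's formula, which bounds the circular mean of $\log|h|$ below irrespective of where the zeros sit) and, above all, the critical case $|\beta|=1$, where the exponential weight carries no information and divergence must instead be detected through the fact that a nonzero entire function is never integrable against the surviving weight.
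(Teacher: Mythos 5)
Your proof is correct. The paper states this corollary without any proof --- it is offered as one of the ``explicit conclusions'' that the characterization of bounded $\kkom_{\ph,g}$ (Corollary~\ref{c_kkom_pg}) is said to allow --- so you have supplied an argument the authors omit. The first half of your write-up, deriving the condition $(*)$ from the chain rule, Theorem~\ref{t_fk_eqnorms}, and the primitives of the approximating functions from Theorem~\ref{t_TropApp}, is exactly the paper's (sketched) proof of Corollary~\ref{c_kkom_pg}; the substantive content you add is the rigidity analysis. That analysis is sound: Tonelli reduces $(*)$ to finiteness of $\int_1^\infty V(r)\,r\,dr$, Parseval gives the circular mean of $|\ph|^2$, Jensen's formula controls the mean of $\log|h|$ from below uniformly in the location of the zeros of $h=g(\ph)\ph'\not\equiv0$ (which is indeed nonzero, since $\ph(\Cbb)$ is open and the zeros of $g$ are isolated), and the convexity of $\exp$ then kills every $a_n\neq0$ with $n\ge2$ and every $|a_1|>1$. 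You are also right that the boundary case $|\beta|=1$ is where the averaged bound fails for $q>2$, and your exact treatment --- using that $\ph$ is then a rigid motion, so $(*)$ collapses to $\int_\Cbb|g(\xi)|^q e^{\al q\,\RRe(\bar\gamma\xi)}\,d\lbarea(\xi)<\infty$, which no nonzero entire $g$ can satisfy because the circular means of the integrand are bounded below by $|c'_m|^q$ for $\rho\ge1$ --- is correct and is the standard way this rigidity is established in the Fock-space literature. The only cosmetic caveat is that the change of variables produces an extra harmless factor $e^{-\al q|\gamma|^2/2}$, and that for $|\beta|=1$ the ratio $(1+|\ph(z)|)/(1+|z|)$ is bounded above and below only outside a compact set, which of course does not affect integrability at infinity.
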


The proof of the following explicit description for $\kkom_{\ph, g}$ is similar to that of Corollary~\ref{c_kom_gp}.

\begin{cory}\label{c_kkom_pg}
Assume that $\ph, g \in\hol(\Cbb)$, $\al>0$ and $0<q\le \infty$.
Then $\kkom_{\ph, g}: \fk_\al^\infty(\Cbb)\to \fk_\al^q(\Cbb)$ is a bounded operator if and only if
  \[
\frac{1+|\ph(z)|}{1+|z|} |g(\ph(z)) \ph^\prime(z)| e^{\frac{\al}{2} (|\ph(z)|^2 - |z|^2)} \in L^q(\Cbb, \lbarea).
  \]
\end{cory}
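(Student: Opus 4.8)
The plan is to follow the two-sided scheme of the proof of Corollary~\ref{c_kom_gp}, the only structural change being that differentiating $\kkom_{\ph, g} f(z)=\int_0^{\ph(z)} f^\prime(\xi) g(\xi)\,d\xi$ by the chain rule gives $(\kkom_{\ph, g} f)^\prime(z)=f^\prime(\ph(z))\,g(\ph(z))\,\ph^\prime(z)$, so the symbol attached to $f^\prime(\ph(z))$ is now $g(\ph(z))\ph^\prime(z)$ in place of $g(z)$. First I would invoke Theorem~\ref{t_fk_eqnorms} in dimension one, where $\rad F(z)=zF^\prime(z)$. Since $\frac{|z|}{(1+|z|)^2}\asymp\frac{1}{1+|z|}$ on $\Cbb$ (the behaviour near the origin is harmless, as every function in sight is entire and hence locally bounded), this reformulates property~(i) as
\[
\frac{|f^\prime(\ph(z))\,g(\ph(z))\,\ph^\prime(z)|}{1+|z|}\, e^{-\frac{\al}{2}|z|^2}\in L^q(\Cbb,\lbarea)\quad\textrm{for all\ } f\in\fk_\al^\infty(\Cbb).
\]

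For the implication from (ii) to (i), I would use the pointwise derivative estimate furnished by Theorem~\ref{t_fk_eqnorms} with $p=\infty$, namely $|f^\prime(\xi)|\le(1+|\xi|)e^{\frac{\al}{2}|\xi|^2}$ for $f\in\fk_\al^\infty(\Cbb)$. Evaluating at $\xi=\ph(z)$ dominates the reformulated integrand pointwise by the function appearing in (ii), which does not depend on $f$; membership in $L^q(\Cbb,\lbarea)$ for every $f$ then follows by domination, exactly as in the lattice argument of Theorem~\ref{t_wco}.

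For the converse, from (i) to (ii), I would reproduce the approximation step. Theorem~\ref{t_ess_suff} applies to $\we_{\al,1}(t)=(1+t)e^{\frac{\al}{2}t^2}$, so this weight is essential, and Theorem~\ref{t_TropApp} supplies $h_1,\dots,h_n\in\hol(\Cbb)$ with $|h_1(z)|+\dots+|h_n(z)|\asymp\we_{\al,1}(|z|)$. Passing to antiderivatives $f_j(z)=\int_0^z h_j(\xi)\,d\xi$ produces test functions with $f_j^\prime=h_j$ and, by Theorem~\ref{t_fk_eqnorms} with $p=\infty$, $f_j\in\fk_\al^\infty(\Cbb)$. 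Inserting $f=f_j$ into the reformulated condition~(i), summing over $j$, and replacing $\sum_j|f_j^\prime(\ph(z))|=\sum_j|h_j(\ph(z))|$ by its equivalent $(1+|\ph(z)|)e^{\frac{\al}{2}|\ph(z)|^2}$ yields precisely the function of (ii) as an element of $L^q(\Cbb,\lbarea)$.

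The main obstacle is the converse direction: one must manufacture functions in $\fk_\al^\infty(\Cbb)$ whose derivatives collectively realize the extremal growth $(1+|\ph(z)|)e^{\frac{\al}{2}|\ph(z)|^2}$ at the image points $\ph(z)$. This is exactly what the essentiality of $\we_{\al,1}$ together with the entire-map approximation of Theorem~\ref{t_TropApp} delivers, once one passes to antiderivatives so that the prescribed growth is imposed on $f_j^\prime$ rather than on $f_j$. All remaining steps are the same lattice domination used for Corollary~\ref{c_kom_gp}, carried out with $g(\ph(z))\ph^\prime(z)$ in place of $g(z)$.
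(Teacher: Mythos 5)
Your proposal is correct and follows exactly the route the paper intends: the paper gives no separate argument for Corollary~\ref{c_kkom_pg}, stating only that the proof is similar to that of Corollary~\ref{c_kom_gp}, and your adaptation --- computing $(\kkom_{\ph,g}f)^\prime(z)=f^\prime(\ph(z))g(\ph(z))\ph^\prime(z)$ by the chain rule, using the derivative bound from Theorem~\ref{t_fk_eqnorms} for one direction, and the antiderivatives of the approximating functions from Theorems~\ref{t_ess_suff} and \ref{t_TropApp} for the converse --- is precisely that adaptation. Your remark that the factor $|z|/(1+|z|)^2$ versus $1/(1+|z|)$ is harmless near the origin is a point the paper glosses over but handles implicitly in the same way.
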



\end{document}